\tikzset{hfit/.style={rounded rectangle, inner xsep=0pt},
           vfit/.style={rounded corners}}
\pgfplotsset{compat = newest}
\newtheorem{thm}{Theorem}[section]
\newtheorem{defn}[thm]{Definition}
\newtheorem{prop}[thm]{Proposition}
\theoremstyle{remark}
\newtheorem{rem}[thm]{Remark}
\newcommand{\conley}{\textrm{conley}}
\newcommand{\bw}{{\bf w}}
\newcommand{\bx}{{\bf x}}
\newcommand{\by}{{\bf y}}
\newcommand{\K}{{\mathbb{K}}}
\newcommand{\N}{{\mathbb{N}}}
\newcommand{\R}{{\mathbb{R}}}
\newcommand{\Z}{{\mathbb{Z}}}
\newcommand{\cC}{{\mathcal C}}
\newcommand{\cF}{{\mathcal F}}
\newcommand{\cH}{{\mathcal H}}
\newcommand{\cK}{{\mathcal K}}
\newcommand{\cM}{{\mathcal M}}
\newcommand{\cS}{{\mathcal S}}
\newcommand{\cX}{{\mathcal X}}
\newcommand{\sP}{{\mathsf P}}
\newcommand{\be}{{\mathbf e}}
\newcommand{\id}{\text{id}}
\DeclareMathOperator{\st}{star}
\hfill\end{quote}\end{snugshade}}
\definecolor{shadecolor}{rgb}{0.8,0.8,0.8}
\newcommand{\balpha}{\bm{\alpha}}
\newcommand{\bbeta}{\bm{\beta}}
\newcommand{\uu}{{\bf u}}
\newcommand{\vv}{{\bf v}}
\newcommand{\bt}{{\bf t}}
\newcommand{\cross}{{\rm cross}}
\newcommand{\sSC}{\mathsf{SC}}
\newcommand{\Mate}{\textsc{Mate}}
\newcommand{\GMate}{\textsc{GradedMate}}
\newcommand{\GMorse}{\textsc{GradedMorse}}
\newcommand{\CM}{\textsc{ConnectionMatrix}}
\newcommand{\HOM}{\textsc{Homology}}
\newcommand{\Conf}{\mathbf{D}}
\newcommand{\rel}{~{\rm rel}~}
\begin{document}

\title{Morse Theoretic Templates for High Dimensional Homology Computation}

%\thanks{}

% Grants or other notes about the article that should go on the front
% page should be placed within the \thanks{} command in the title
% (and the %-sign in front of \thanks{} should be deleted)
%
% General acknowledgments should be placed at the end of the article.

%\subtitle{Do you have a subtitle?\\ If so, write it here}

%\titlerunning{Short form of title}        % if too long for running head

\author{Shaun Harker 
		\and Konstantin Mischaikow \and Kelly Spendlove 
}

%\authorrunning{Short form of author list} % if too long for running head

\institute{S. Harker \at
              Department of Mathematics, Rutgers University, Piscataway, NJ 08854, USA \\
              \email{sharker@math.rutgers.edu}             %  \\
%             \emph{Present address:} of F. Author  %  if needed
           \and
           K. Mischaikow \at
              Department of Mathematics, Rutgers University, Piscataway, NJ 08854, USA\\
              \email{mischaik@math.rutgers.edu}
            \and
            K. Spendlove \at
            Mathematical Institute, University of Oxford, Oxford, Oxfordshire OX2 6GG, UK\\
            \email{spendlove@maths.ox.ac.uk}
}

\date{Received: date / Accepted: date}
% The correct dates will be entered by the editor

\maketitle

\begin{abstract}

We introduce the notion of a \emph{template} for discrete Morse theory.  Templates provide a memory efficient approach to the computation of homological invariants (e.g., homology, persistent homology, Conley complexes) of cell complexes.  We demonstrate the effectiveness of templates in two settings: first, by computing the homology of certain cubical complexes which are homotopy equivalent to $\mathbb{S}^d$ for $1\leq d \leq 20$, and second,  by computing Conley complexes and connection matrices for a collection of examples arising from a Conley-Morse theory on spaces of braids diagrams.

\keywords{Computational topology \and Computational homology \and Discrete Morse theory \and Cubical complex \and Connection matrix \and Conley Complex }
% \PACS{PACS code1 \and PACS code2 \and more}
% \subclass{MSC code1 \and MSC code2 \and more}
\end{abstract}

\section{Introduction}

The use of algebraic topological methods, and in particular homology, to analyze data is a rapidly expanding field as it provides systematic methods with which to codify complex and high-dimensional data in terms of algebraic invariants.
The same is true for the study of nonlinear dynamical systems where computational advances allow for the global analysis of high dimensional systems.
However, a major stumbling block to computing homology in high dimensional settings is that in applications the typical starting point is a cell complex $\cX$ and the number of elements $N$ in the complex grows rapidly with dimension.
Thus, memory constraints imply that explicitly generating a high dimensional cell complex is typically infeasible.
Furthermore, the worst case analysis for the algebraic operations required to compute homology is $O(N^3)$ or worse. 

A popular technique is to use discrete Morse theory~\cite{forman,focm,mn} to preprocess the cellular data, which in practice has the effect of typically producing close to linear time computational speeds for computing homology.
The key concept is that of an \emph{acyclic partial matching} $w\colon \cX\to \cX$ (see Definition~\ref{defn:acyclicmatching}) from which one can derive another, typically much smaller chain complex, $C_\bullet(w)$ with homology isomorphic to $H_\bullet(\cX)$.  The expectation (which is typically realized) is that the computational cost of computing $H_\bullet(C_\bullet(w))$ is significantly less than that of $H_\bullet(\cX)$.

An acyclic partial matching is an example of the more general {\em partial matching}, which is an involution $w\colon \cX\to \cX$ obeying an {\em incidence trichotomy}; the set $A(w)=\{\xi\mid w(\xi)=\xi\}$ is the basis for the chain complex $C_\bullet(w)$ when $w$ is acyclic.  Hence one wants to minimize the cardinality of $A(w)$.  However, it is known that in general computing an acyclic partial matching which minimizes $A(w)$, i.e., one that is \emph{optimal}, is NP-hard~\cite{joswig2006computing}, and can even be NP-hard to approximate~\cite{bauer2019hardness}. In this paper, we instead leverage one of the strengths of discrete Morse theory: given a fixed cell complex $\cX$, there are typically many admissible (acyclic) partial matchings on $\cX$.   In the case that one has a sequence of partial matchings $(w_1,w_2,\ldots,w_n)$, we give an algorithm $\Mate$ (see Section~\ref{sec:pr}) which aggregates the sequence into a single matching $w$, for which $\# A(w) \leq \#A(w_1)$. In Section~\ref{sec:pr}, we prove the following theorem regarding $\Mate$.

\begin{thm}\label{thm:algmatching}
Let $\cX$ be a cell complex and $\bw = (w_1,w_2,\ldots,w_n)$ a sequence of partial matchings.  Then $w(\cdot):=\textsc{Mate}(\cdot,\bw)$ is a $\bw$-stable partial matching.  Moreover, the algorithm $\textsc{Mate}(\cdot,\bw)$ executes in $O(2^n)$ time.
\end{thm}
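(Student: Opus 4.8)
The plan is to separate the two assertions: that $w=\Mate(\cdot,\bw)$ is a $\bw$-stable partial matching, and that each evaluation runs in $O(2^n)$ time. I would prove the first by induction on the length $n$ of the sequence $\bw$, and the second by reading a recurrence off the pseudocode of $\Mate$. The base case $n=1$ is immediate: $\Mate(\cdot,(w_1))=w_1$, which is a partial matching by hypothesis and, for a single-element sequence, is trivially $(w_1)$-stable. For the inductive step I would assume that $u:=\Mate(\cdot,\bw')$ is a $\bw'$-stable partial matching for the truncated sequence $\bw'=(w_1,\ldots,w_{n-1})$, and verify that incorporating the remaining matching $w_n$ through the recursive rule defining $\Mate$ preserves each of the three defining properties of a partial matching: that the output of $w$ lies in $\cX$, that $w$ is an involution, and that the assignment $\xi\mapsto w(\xi)$ obeys the incidence trichotomy.

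The crux is the case analysis for the involution law $w(w(\xi))=\xi$. Unfolding one level of recursion, the algorithm computes from $\xi$ a candidate mate using $u$ and then consults $w_n$ to either accept that candidate or re-route the pairing. I would organize the verification by the status of the relevant cells under $u$ and under $w_n$ — whether each is a fixed point or is genuinely moved — and confirm in every case that the pair $\{\xi,w(\xi)\}$ is symmetric, so that evaluating $\Mate$ at $w(\xi)$ returns $\xi$, and that the incidence between $\xi$ and $w(\xi)$ falls into the trichotomy. The $\bw$-stability of $w$ should then follow directly from the construction: the aggregate respects the refinement imposed by $w_n$ on the cells it governs while inheriting $\bw'$-stability on the remaining cells from the inductive hypothesis.

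For the complexity I would analyze the recursion tree of a single call $\Mate(\xi,\bw)$. The key structural fact, visible in the pseudocode, is that one invocation on a sequence of length $n$ spawns at most two invocations on the truncated sequence of length $n-1$ — one to produce the candidate mate and one to test the acceptance/involution condition — plus $O(1)$ work for the queries to $w_n$. This gives the recurrence $T(n)\le 2\,T(n-1)+O(1)$ with $T(1)=O(1)$, whose solution is $T(n)=O(2^n)$. I would stress that this bound is per query and independent of the number of cells $N$ in $\cX$, which is precisely the feature that makes $\Mate$ usable as a memory-efficient template: the aggregate matching is never materialized, only evaluated on demand.

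The step I expect to be the main obstacle is the involution verification in the inductive case. Because the recursive rule may override the $u$-mate of $\xi$ while $\Mate$ is simultaneously invoked at $w_n(\xi)$ or at the candidate cell, one must rule out any cyclic or asymmetric assignment and confirm that the two evaluations $w(\xi)$ and $w(w(\xi))$ genuinely point back to one another. Enumerating the interaction between $u$ and $w_n$, and checking that the incidence trichotomy survives each override, is the delicate bookkeeping; by contrast the complexity bound is a routine consequence of the branching factor of the recursion.
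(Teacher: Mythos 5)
Your overall strategy coincides with the paper's: the paper proves Proposition~\ref{prop:stablematching} by exactly the induction you describe, building $w_{\le i}$ from $w_{\le i-1}$ and $w_i$ (with base case $w_{\le 0}=\mathrm{id}$ rather than $n=1$), then observes that $\textsc{MateHelper}(\cdot,i,\bw)=w_{\le i}(\cdot)$; your recurrence $T(n)\le 2T(n-1)+O(1)$ is verbatim the paper's complexity argument. However, you have misplaced the difficulty. The involution and trichotomy checks you single out as the crux are routine: $\xi$ is re-matched at level $n$ only when both $\xi$ and $w_n(\xi)$ are fixed by the level-$(n-1)$ matching $u$, so the symmetry of the pair $\{\xi,w_n(\xi)\}$ is immediate from $w_n$ being an involution, and the trichotomy is inherited from whichever of $u$ or $w_n$ supplied the value. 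The paper dispatches this in a few lines.

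The genuine gap is the stability claim, which you dismiss as ``following directly from the construction.'' It does not follow by inheritance: the matches newly created at level $n$ produce pairs $\xi_0\gg\xi_1$ in the flowline relation of $w$ that do not exist for $u$, so the inductive hypothesis is silent about them, and ``the aggregate respects the refinement imposed by $w_n$'' is not an argument. What is needed is the one structural observation your plan never states: setting $A_i=A(w_{\le i})$, the greedy rule yields a descending chain $\cX=A_0\supseteq A_1\supseteq\cdots\supseteq A_n$, and a match recorded with label $j$ forces both of its cells into $A_{j-1}$. Given an unstable pair with witness $i<j,j'$ and $w_i(\xi_1)=w(\xi_0)$, these two facts place both $\xi_1$ and $w(\xi_0)$ in $A_{i-1}$, so stage $i$ of the construction would already have matched them, contradicting $w(\xi_1)\ne w(\xi_0)$. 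Without this argument the substantive content of the theorem --- the very property that later drives acyclicity in Theorem~\ref{thm:hypercube} --- is unproved; the ``delicate bookkeeping'' effort should be redirected from the involution check to here.
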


While $\Mate$ produces a partial matching from a sequence of (acyclic) partial matchings, it need not be acyclic.  To prove acyclicity, more control over the particular sequence $\bw$ is needed.  We return to this point below.

Now, not only do we wish to minimize the size of $A(w)$, we also note that if $A(w)$ can be derived by examining $\cX$ locally, then the above mentioned memory costs can be avoided because one does not need to store the entire cell complex $\cX$ in memory at once.  To achieve this we develop the notion of a \emph{template}, by which we mean a predetermined partial matching; in this paper, we use the set of templates $\{\cM_i\}$ which are defined on particular cell complexes which we call \emph{hypercube complexes}.  The $n$-dimensional hypercube complex ($n$-cube complex), denoted $\cH_n$, consists of binary strings of length $n$.  The structure of the cell complex is defined in the following manner: a binary string $\bx= (x_i)$ with $x_i\in \{0,1\}$ is a cell, with dimension given by $\sum_i x_i$.  A cell $\bx=(x_i)$ is a face of $\by=(y_i)$ if and only if $x_i\le y_i$ for all $i$.  See Section~\ref{sec:prelim:hypercube} for more details. Fig.~\ref{fig:3cube} is an example of the Hasse diagram of the face poset of the 3-cube complex, $\cH_3$.

\begin{figure}[h!]
    \centering
    \begin{tikzpicture}
 \tikzstyle{vertex}=[circle,minimum size=20pt,inner sep=0pt]
 \tikzstyle{edge} = [draw,thick,-,black]
  \tikzstyle{medge} = [draw,dotted,black]
\def\x{1.5}
\def\y{1}
%x,%y,%depth
\node[vertex] (0) at (0,0) {$000$};

\node[vertex] (100) at (\x,\y) {$100$};
\node[vertex] (001) at (0,\y) {$001$};
\node[vertex] (010) at (-\x,\y) {$010$};

\node[vertex] (110) at (0,2*\y) {$110$};
 \node[vertex] (011) at (-\x,2*\y) {$011$};
 \node[vertex] (101) at (\x,2*\y) {$101$};

 \node[vertex] (111) at (0,3*\y) {$111$};
 
 \draw[edge] (0)--(010)--(011)--(111);
 \draw[edge] (0)--(010)--(110)--(111);
 \draw[edge] (0)--(001)--(011);
 \draw[edge] (0)--(001)--(101)--(111);
 \draw[edge] (0)--(001)--(101);
 \draw[edge] (0)--(100)--(101);
 \draw[edge] (100)--(110);
\end{tikzpicture}
 \caption{The Hasse diagram of the face poset of the $3$-cube complex $\cH_3$.  Cells are binary strings of length three. }   \label{fig:3cube}
\end{figure}

% Consider the function $\cM_i\colon \c\cHn\to \cH_n$  defined as 
%  \begin{equation}
% \cM_i(\bx) = \bx+\be_i,
%  \end{equation}

% where $\be_i=(e_i)$ denotes the vector in $\Z_2^n$ such that $e_i = 1$ and $e_j=0$ for all $j\neq i$, and $+$ denotes the group operation in $\Z_2^n$.
For an example of a template, consider the function $\cM_i\colon \cH_n\to \cH_n$ which flips the $i$th bit from the left, e.g., for $\cM_1\colon \cH_3\to \cH_3$ we have $\cM_1(000) = 100$ and $\cM_1(100) = 000$.  This is a predetermined acyclic partial matching, which is further formalized in Setion~\ref{sec:hypercube}.  We depict partial matchings visually as in Fig.~\ref{fig:ncube:matching}. 

\begin{figure}[h!]
\begin{minipage}{.3\textwidth}
\centering
\begin{tikzpicture}
 \tikzstyle{vertex}=[circle,minimum size=20pt,inner sep=0pt]
 \tikzstyle{edge} = [draw,thick,-,black]
  \tikzstyle{medge} = [draw,dotted,black]
\def\x{1.5}
\def\y{1}
%x,%y,%depth
\node[vertex] (0) at (0,0) {$000$};

\node[vertex] (100) at (\x,\y) {$100$};
\node[vertex] (001) at (0,\y) {$001$};
\node[vertex] (010) at (-\x,\y) {$010$};

\node[vertex] (110) at (0,2*\y) {$110$};
 \node[vertex] (011) at (-\x,2*\y) {$011$};
 \node[vertex] (101) at (\x,2*\y) {$101$};

 \node[vertex] (111) at (0,3*\y) {$111$};
 %\draw[-latex,line width=1pt] (1) -- (10);
 \draw[edge] (0) -- (001);
 \draw[edge] (0)--(010);
 \draw[edge] (001)--(011);
  \draw[edge] (010)--(011);
 \draw[edge] (100)--(101);
  \draw[edge] (100)--(110);
 \draw[edge] (110)--(111);
  \draw[edge] (101)--(111);
%  \draw[edge] (101)--(111);
%  \draw[edge] (0)--(010)--(011)--(111);
%  \draw[edge] (0)--(010)--(110)--(111);
%  \draw[edge] (0)--(001)--(011);
%  \draw[edge] (0)--(001)--(101)--(111);
%  \draw[edge] (0)--(001)--(101);
%  \draw[edge] (0)--(100)--(101);
%  \draw[edge] (100)--(110);
 %Matching
 \draw[-latex,line width=.5pt] (0)--(100);
 \draw[-latex,,line width=.5pt] (001)--(101);
 \draw[-latex,,line width=.5pt] (011)--(111);
\draw[-latex,,line width=.5pt] (010)--(110);
\end{tikzpicture}

$\cM_1$
\end{minipage}
\hspace{3mm}
\begin{minipage}{.3\textwidth}
\centering
\begin{tikzpicture}
 \tikzstyle{vertex}=[circle,minimum size=20pt,inner sep=0pt]
 \tikzstyle{edge} = [draw,thick,-,black]
  \tikzstyle{medge} = [draw,dotted,black]
\def\x{1.5}
\def\y{1}
%x,%y,%depth
\node[vertex] (0) at (0,0) {$000$};

\node[vertex] (100) at (\x,\y) {$100$};
\node[vertex] (001) at (0,\y) {$001$};
\node[vertex] (010) at (-\x,\y) {$010$};

\node[vertex] (110) at (0,2*\y) {$110$};
 \node[vertex] (011) at (-\x,2*\y) {$011$};
 \node[vertex] (101) at (\x,2*\y) {$101$};

 \node[vertex] (111) at (0,3*\y) {$111$};
 %\draw[-latex,line width=1pt] (1) -- (10);
% \draw[edge] (0) -- (001);
 \draw[edge] (0)--(010);
 \draw[edge] (001)--(011);
 % \draw[edge] (010)--(011);
% \draw[edge] (100)--(101);
  \draw[edge] (100)--(110);
% \draw[edge] (110)--(111);
  \draw[edge] (101)--(111);
  \draw[edge] (0) -- (100);
  \draw[edge] (011) -- (111);
  \draw[edge] (010) -- (110);
  \draw[edge] (001)--(101);
%  \draw[edge] (101)--(111);
%  \draw[edge] (0)--(010)--(011)--(111);
%  \draw[edge] (0)--(010)--(110)--(111);
%  \draw[edge] (0)--(001)--(011);
%  \draw[edge] (0)--(001)--(101)--(111);
%  \draw[edge] (0)--(001)--(101);
%  \draw[edge] (0)--(100)--(101);
%  \draw[edge] (100)--(110);
 %Matching
 \draw[-latex,line width=.5pt] (0)--(001);
 \draw[-latex,,line width=.5pt] (100)--(101);
 \draw[-latex,,line width=.5pt] (110)--(111);
\draw[-latex,,line width=.5pt] (010)--(011);
\end{tikzpicture}

$\cM_3$
\end{minipage}
\hspace{3mm}
\begin{minipage}{.3\textwidth}
\centering
\begin{tikzpicture}
 \tikzstyle{vertex}=[circle,minimum size=20pt,inner sep=0pt]
 \tikzstyle{edge} = [draw,thick,-,black]
  \tikzstyle{medge} = [draw,dotted,black]
\def\x{1.5}
\def\y{1}
%x,%y,%depth
\node[vertex] (0) at (0,0) {$000$};

\node[vertex] (100) at (\x,\y) {$100$};
\node[vertex] (001) at (0,\y) {$001$};
\node[vertex] (010) at (-\x,\y) {$010$};

\node[vertex] (110) at (0,2*\y) {$110$};
 \node[vertex] (011) at (-\x,2*\y) {$011$};
 \node[vertex] (101) at (\x,2*\y) {$101$};

 \node[vertex] (111) at (0,3*\y) {$111$};
 %\draw[-latex,line width=1pt] (1) -- (10);
 \draw[edge] (0) -- (001);
 %\draw[edge] (0)--(010);
 %\draw[edge] (001)--(011);
  \draw[edge] (010)--(011);
 \draw[edge] (100)--(101);
%  \draw[edge] (100)--(110);
 \draw[edge] (110)--(111);
 % \draw[edge] (101)--(111);
  \draw[edge] (0)--(100);
  \draw[edge] (010)--(110);
  \draw[edge] (011)--(111);
  \draw[edge] (001)--(101);
%  \draw[edge] (101)--(111);
%  \draw[edge] (0)--(010)--(011)--(111);
%  \draw[edge] (0)--(010)--(110)--(111);
%  \draw[edge] (0)--(001)--(011);
%  \draw[edge] (0)--(001)--(101)--(111);
%  \draw[edge] (0)--(001)--(101);
%  \draw[edge] (0)--(100)--(101);
%  \draw[edge] (100)--(110);
 %Matching
 \draw[-latex,line width=.5pt] (0)--(010);
 \draw[-latex,,line width=.5pt] (001)--(011);
 \draw[-latex,,line width=.5pt] (100)--(110);
\draw[-latex,,line width=.5pt] (101)--(111);
\end{tikzpicture}

$\cM_2$
\end{minipage}
 \caption{The templates $\{\cM_i\}$ on the $3$-cube complex $\cH_3$.  The partial matching $\cM_i$ is visualized by a set of directed arrows, with $\bx\to \by$ if $\cM_i(\bx)=\by$ and $\bx\leq\by$.}   \label{fig:ncube:matching}
\end{figure}

In Section~\ref{sec:hypercube} we prove the following result, which shows that the templates on $\cH_n$ enable the construction of acyclic partial matchings on any cell complex $\cX$ which is embeddable into $\cH_n$ (see Definition~\ref{defn:embedding}).

\begin{thm}\label{thm:hypercube}
Let $\cX$ be a cell complex which is embeddable into $\cH_n$ with embedding $\iota\colon \cX\to \cH_n$.  For any $1\leq i \leq n$, the function $\alpha_i\colon \cX\to \cX$ defined as
\begin{equation}\label{eqn:hypercube:alpha}
\alpha_i(\xi) =
\begin{cases}
   \iota^{-1}(\cM_i(\iota (\xi))) &\text{ if $\cM_i(\iota(\xi))\in \iota(\cX)$},\\
    \xi      & \text{otherwise,}
\end{cases}
\end{equation}
is an acyclic partial matching.  Moreover, the function $w\colon \cX\to \cX$ given by $w(\cdot):=\Mate(\cdot,\balpha)$, where $\balpha=(\alpha_1,\alpha_2,\ldots,\alpha_n)$, is an acyclic partial matching.
\end{thm}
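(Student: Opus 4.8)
The plan is to handle the two assertions in turn, reducing both to the acyclicity of the templates $\cM_i$ on $\cH_n$ (transported through $\iota$) and to the $\balpha$-stability already supplied by Theorem~\ref{thm:algmatching}. For the first claim, I would first check that each $\alpha_i$ is a partial matching and then establish acyclicity by carrying gradient paths into $\cH_n$. That $\alpha_i$ is an involution is immediate from $\cM_i\circ\cM_i=\id$ and the injectivity of $\iota$: if $\cM_i(\iota\xi)\in\iota(\cX)$ then $\iota(\alpha_i\xi)=\cM_i(\iota\xi)$, so $\alpha_i(\alpha_i\xi)=\xi$, and otherwise $\alpha_i\xi=\xi$. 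The incidence trichotomy holds because a matched pair $\{\xi,\alpha_i\xi\}$ maps under $\iota$ to two strings differing in the single bit $i$, so one is a facet of the other, and $\iota$ preserves the face relation. For acyclicity, note that $\cM_i$ itself admits no nontrivial gradient path: if $\sigma\lessdot\cM_i(\sigma)$ then $\cM_i(\sigma)$ has bit $i$ equal to $1$, and every other facet $\sigma'\lessdot\cM_i(\sigma)$ with $\sigma'\neq\sigma$ still has bit $i$ equal to $1$, so $\cM_i$ matches $\sigma'$ downward and the path terminates. Since $\iota$ sends gradient paths of $\alpha_i$ to gradient paths of $\cM_i$, the matching $\alpha_i$ likewise has no nontrivial closed gradient path, hence is acyclic.

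For the aggregate $w=\Mate(\cdot,\balpha)$, Theorem~\ref{thm:algmatching} already gives that $w$ is a $\balpha$-stable partial matching, so only acyclicity remains, and I would prove it by induction on $n$, the case $n=1$ (where $w=\alpha_1$) being covered above. Working in the coordinates of $\cH_n$, let $\phi(\xi):=(\iota\xi)_1\in\{0<1\}$; this is order-preserving, and its fibers are the subcomplexes of $\cX$ on which bit $1$ is constant. Every $w$-match flips a single bit: those flipping bit $1$ come from $\alpha_1$ and cross from $\phi=0$ to $\phi=1$, while those flipping a bit $\ge 2$ preserve $\phi$ and lie inside one fiber. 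Tracking $\phi$ along a gradient path $\xi_0\lessdot\eta_0\gtrdot\xi_1\lessdot\cdots$, a bit-$1$ up-match raises $\phi$ from $0$ to $1$ (its descent is on a bit $\ge 2$, keeping $\phi=1$), an in-fiber up-match followed by an in-fiber descent leaves $\phi$ fixed, and the only way $\phi$ could drop is an in-fiber up-match $\xi_j\lessdot\eta_j$ on a bit $a_j\ge 2$ (so $(\iota\eta_j)_1=1$) followed by a descent removing bit $1$, giving $\xi_{j+1}=\eta_j-e_1$.

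The crux is to exclude this single $\phi$-decreasing step using $\balpha$-stability, and this is where the special structure of $\balpha$ (rather than an arbitrary sequence, for which $\Mate$ need not be acyclic) is essential. In that configuration $\xi_{j+1}=\eta_j-e_1$ lies in $\cX$ with bit $1$ equal to $0$, so its bit-$1$ flip is $\eta_j\in\cX$; equivalently $\alpha_1$ matches $\eta_j$ to $\xi_{j+1}$. As $\alpha_1$ has top priority in $\balpha$, stability forces $w(\eta_j)=\alpha_1(\eta_j)=\xi_{j+1}$, contradicting $w(\eta_j)=\xi_j$ with $a_j\ge 2$. Hence $\phi$ is nondecreasing along every gradient path of $w$, so any nontrivial closed path must keep $\phi$ constant and therefore lives in a single fiber $\phi^{-1}(c)$, where $w$ restricts to $\Mate(\cdot,(\alpha_2,\ldots,\alpha_n))$ on a complex embeddable in $\cH_{n-1}$ and is acyclic by the inductive hypothesis. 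I expect this stability step to be the main obstacle: making it rigorous requires the precise form of $\balpha$-stability from Theorem~\ref{thm:algmatching}, namely that a pair matched by the top-priority template is never overridden by a lower-priority in-fiber match, and a secondary point to pin down is that the restriction of $w$ to a fiber genuinely coincides with the tail aggregate, which should follow from the recursive structure of $\Mate$.
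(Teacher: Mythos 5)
Your handling of the individual templates and your use of stability to forbid a ``zag'' through a lower-priority bit is, at its core, the paper's own mechanism: the paper proves each $\alpha_i$ acyclic by pushing flowlines forward through $\iota$ (Proposition~\ref{prop:incidenceembed}), and proves acyclicity of $w$ by taking the minimal label $i^*$ among the up-steps of a putative closed flowline, locating the down-step that must flip bit $i^*$ back to $0$, and exhibiting the resulting pair as $\balpha$-unstable. Your bit-$1$ monotonicity step is exactly this for $i^*=1$. One point of care: the assertion that ``stability forces $w(\eta_j)=\alpha_1(\eta_j)$'' is not what Definition~\ref{defn:stable} provides (stability only forbids unstable \emph{pairs}); you must instead exhibit the configuration $\xi_j\stackrel{a_j}{\prec}w(\xi_j)\succ\xi_{j+1}\stackrel{a_{j+1}}{\prec}w(\xi_{j+1})$ with $w(\xi_j)=\alpha_1(\xi_{j+1})$ and $1<a_j$, $1<a_{j+1}$. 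The second inequality is not automatic from your setup, but it does hold: $a_{j+1}=1$ would give $w(\xi_{j+1})=\eta_j=w(\xi_j)$, hence $\xi_{j+1}=\xi_j$, contradicting distinctness of the cells in a flowline.

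The genuine gap is in your inductive step. It is false in general that $w=\Mate(\cdot,\balpha)$ restricted to a fiber of the first bit coincides with $\Mate(\cdot,(\alpha_2,\dots,\alpha_n))$ on that fiber. In the greedy construction \eqref{eqn:matching}, the test for applying $w_i$ is membership of both $\xi$ and $w_i(\xi)$ in the \emph{global} set $A_{i-1}$ of cells left unmatched by earlier templates, so a cell in the fiber can be left unmatched by $w$ even though the fiber-local tail aggregate would match it. Concretely, for $\cX=\{00,01,10\}\subset\cH_2$ one gets $w(00)=10$ and $w(01)=01$, because $\alpha_2(01)=00$ is already claimed by $\alpha_1$; yet $\Mate(\cdot,(\alpha_2))$ on the fiber $\{00,01\}$ matches $00\leftrightarrow 01$. (Indeed $w$ restricted to the fiber need not even be a self-map of the fiber.) So the inductive hypothesis is about a different matching than the one whose cycles you need to exclude, and the recursion does not close. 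The repair is either to formulate a stronger inductive statement that applies to the actual restriction of $w$, or---more simply---to drop the induction and run your monotonicity argument a single time with $\phi$ given by the minimal up-step label $i^*$ occurring in the cycle (using that all labels in the cycle are $\ge i^*$ to get the strict inequalities), which is precisely the paper's proof.
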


Observe that the use of the hypercube complexes allows us to guarantee the construction of an acyclic partial matching.

We now wish to generalize the use of templates from  $\cH_n$-embeddable cell complexes to an arbitrary cell complex $\cX$.  We show that this is possible if $\cX$ can be decomposed in such a way that each part of the decomposition is embeddable into a hypercube.  In Section~\ref{sec:general} we prove the following result.

\begin{thm}\label{thm:global}
Let $\cX$ be a cell complex and $f\colon (\cX,\leq)\to \sP$ an order-preserving map, where $(\cX,\leq)$ is the face poset of $\cX$ and $\sP$ is a poset.  If for each $p\in \sP$ there is an $n$ such that the fiber $\cX^p = f^{-1}(p)$ is embeddable into $\cH_n$,  then the function $w\colon \cX\to \cX$ defined by
\[
w(\xi ) = \Mate (\xi, \balpha_p) \text{ if and only if } \xi \in \cX^p,
%\bigsqcup_{p\in \sP} \Mate(\cdot, \balpha_p),
\]
where the $\balpha_p$ are given by \eqref{eqn:hypercube:alpha} is an acyclic partial matching on $\cX$.
\end{thm}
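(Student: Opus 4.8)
The plan is to assemble the fiber-wise matchings of Theorem~\ref{thm:hypercube} into the global map $w$ and to exploit the order-preserving map $f$ to confine any gradient cycle to a single fiber. I begin with the partial-matching axioms. Since $f$ is a function, the fibers $\{\cX^p\}_{p\in\sP}$ partition $\cX$, so each $\xi$ lies in the unique fiber $\cX^{f(\xi)}$ and $w(\xi):=\Mate(\xi,\balpha_{f(\xi)})$ is well-defined. By Theorem~\ref{thm:hypercube}, each $w_p:=\Mate(\cdot,\balpha_p)$ is an acyclic partial matching on $\cX^p$; in particular it is an involution of $\cX^p$ whose non-fixed pairs are cover relations. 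Because $w_p$ maps $\cX^p$ into itself, the partner $w(\xi)$ always lies in the same fiber as $\xi$; consequently $w$ is a global involution, its non-fixed pairs are cover relations of $\cX$ (a dimension-one face relation in the graded face poset is a cover), and the incidence trichotomy for $w$ is inherited from that of the $w_p$.

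The crux is acyclicity, which I would reduce to the fiber-wise statement by showing that every gradient cycle is confined to a single fiber. Recall that a gradient $V$-path alternates an up-step along the matching, $\xi_i\lessdot w(\xi_i)$, with a down-step to a distinct facet, $\xi_{i+1}\lessdot w(\xi_i)$ with $\xi_{i+1}\neq\xi_i$. Along such a path the value of $f$ is monotone: the up-step keeps $\xi_i$ and $w(\xi_i)$ in one fiber, so $f(\xi_i)=f(w(\xi_i))$, while the down-step passes to a face, so order-preservation of $f$ yields $f(\xi_{i+1})\leq f(w(\xi_i))=f(\xi_i)$. Hence the values $f(\xi_0)\geq f(\xi_1)\geq\cdots$ are non-increasing along the path.

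Suppose now, toward a contradiction, that there is a nontrivial closed $V$-path $\xi_0,w(\xi_0),\ldots,\xi_k=\xi_0$. Monotonicity together with $f(\xi_k)=f(\xi_0)$ forces equality throughout, so every cell of the cycle lies in the single fiber $\cX^p$ with $p=f(\xi_0)$. I would then verify that this cycle is a legitimate $w_p$-gradient path in $\cX^p$: each up-step is $w=w_p$ on $\cX^p$, and each down-step is a cover relation of $\cX$ between two cells of $\cX^p$, hence a cover relation within $\cX^p$. This produces a nontrivial closed $w_p$-path, contradicting the acyclicity of $w_p$ granted by Theorem~\ref{thm:hypercube}; therefore $w$ admits no nontrivial gradient cycle and is acyclic.

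The step I expect to be most delicate is this final compatibility check---that restricting an ambient $V$-path to a fiber produces a bona fide gradient path for $w_p$. It relies on the face poset being graded, so that matched pairs and path down-steps are simultaneously cover relations in $\cX$ and in each fiber $\cX^p$, and on the embedding $\iota_p$ of Definition~\ref{defn:embedding} preserving covers. Once this compatibility is secured, the $f$-monotonicity argument cleanly localizes global acyclicity to the fiber-wise acyclicity already established.
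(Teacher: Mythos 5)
Your proposal is correct and follows the same route as the paper: obtain an acyclic partial matching on each fiber $\cX^p$ from Theorem~\ref{thm:hypercube} and then assemble them along the order-preserving map $f$. The only difference is that the paper delegates the assembly step to the Patchwork Theorem (Theorem~\ref{thm:dmt:patchwork}), whereas you re-prove that step inline via the observation that $f$ is non-increasing along flowlines, so any closed flowline is trapped in a single fiber---which is exactly the standard argument behind that theorem.
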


In Section~\ref{sec:general} we also describe schemes by which to decompose both cubical and simplicial complexes in order to use Theorem~\ref{thm:global}.  

Finally, in Section~\ref{sec:experiments} we perform computational experiments using our implementation, available at~\cite{cmcode}, for two sets of examples:
\begin{enumerate}
    \item[(i)] Computing the homology of cubical complexes $\cS^d,\cS_{top}^d \subset \R^{d+1}$ which are homotopy equivalent to $d$-spheres $\mathbb{S}^d$ for $1\leq d \leq 20$. See Section~\ref{sec:experiments:hypersphere}.
    \item[(ii)] Computing Conley complexes (see Section~\ref{sec:braids}) for graded cubical complexes $\cC(m-1;d)\subset \R^d$ associated to braid diagrams $\vv_n,\bt^m$ arising from a Conley-Morse theory on braids diagrams~\cite{im,braids}.
\end{enumerate}

We note that these can be sizable computations, with one of the largest examples for (i) being $\cS^{20}\subset \R^{21}$ containing on the order of $3\times 10^9$ cells, and the largest example for (ii) being $\cC(5;10)\subset \R^{10}$, containing on the order of $2.5\times 10^{10}$ cells.  Such examples are possible due to the low memory overhead of the templates scheme.  Moreover in both set of examples we see that our templates-based algorithm has roughly linear performance with respect to $N$, the number of cells. See Fig.~\ref{plot:intro}.  

\begin{figure}
\begin{minipage}{.48\textwidth}
\centering
\begin{tikzpicture}[scale=.65]
\begin{axis}[
    xlabel= number of cells,
    ylabel= time(s),
    xmode=log,
    ymode=log,
    %log ticks with fixed point,
    % for log axes, x filter operates on LOGS.
    % and log(x * 1000) = log(x) + log(1000):
    % x filter/.code=\pgfmathparse{#1 + 6.90775527898214},
]
\addplot[
    domain = 10:10000000000,
    samples = 200,
    smooth,
    thick,
    gray,
] {x};
\addplot[
    smooth,
    mark=*,
    thick,
    black,
] file[skip first] {data_hypercube.dat};
\addplot[
    smooth,
    mark=square*,
    thick,
    black,
] file[skip first] {data_top_hypercube.dat};

\legend{
    $f(x)=x$,
    $\cS^d$,
    $\cS_{top}^d$
}
\end{axis}
\end{tikzpicture}\\
(a)
%\caption{Time(s) vs. $\# \cS^n$, i.e., number of cells for $\cS^n$; log-log scale.}\label{plot:hypersphere}
\end{minipage}
\hspace{\fill}
\begin{minipage}{.48\textwidth}
\flushright
\centering
\begin{tikzpicture}[scale=.65]
\begin{axis}[
    xlabel= number of cells,
    ylabel= time(s),
    xmode=log,
    ymode=log,
    %log ticks with fixed point,
    % for log axes, x filter operates on LOGS.
    % and log(x * 1000) = log(x) + log(1000):
    % x filter/.code=\pgfmathparse{#1 + 6.90775527898214},
]
\addplot[
    domain = 10:10000000000,
    samples = 200,
    smooth,
    thick,
    gray,
] {x};
\addplot[
    smooth,
    mark=*,
    thick,
    black,
] file[skip first] {data_nfold_braid.dat};
\addplot[
    smooth,
    mark=square*,
    thick,
    black,
] file[skip first] {data_torus_braid.dat};
\legend{
    $f(x)=x$,
    $\vv_n$,
    $\bt^m$
}
\end{axis}
\end{tikzpicture}\\
(b)
%\caption{Time(s) vs. $\# \cS_{top}^n$, i.e., number of cells for $\cS_{top}^n$; log-log scale.}\label{plot:hyperspheretop}
\end{minipage}
\caption{Time(s) vs. number for cells for (a) $ \cS^d,\cS_{top}^d\subset \R^{d+1}$,  and (b) cubical complex $\cC(m-1;d)\subset \R^d$ associated to braid diagrams $\vv_n,\bt^m$ (see Section~\ref{sec:braids}). Log-log scale.}\label{plot:intro}
\end{figure}

\section{Preliminaries}

\subsection{Cell Complexes}\label{sec:prelims:cell}

We make use of the following cell complex, cf.~\cite[Chapter III (Definition 1.1)]{lefschetz}, which we define over a field $\K$. 

\begin{defn}
\label{defn:cellComplex}
{\em
A {\em cell complex} $(\cX,\leq,\kappa,\dim)$ is a finite poset  $(\cX,\leq)$ together with two associated functions $\dim\colon \cX\to \N$ and $\kappa\colon \cX\times \cX\to \K$ subject to the following conditions:
\begin{enumerate}
\item $\dim\colon(\cX,\leq)\to (\N,\leq)$ is an order preserving map, i.e., $\xi\leq \xi'$ implies $\dim \xi \leq \dim \xi'$,
\item  For each $\xi$ and $\xi'$ in $\cX$:
\[
\kappa(\xi,\xi')\neq 0\quad\text{implies } \xi'\leq \xi \quad\text{and}\quad \dim(\xi) = \dim(\xi')+1,
\]
\item\label{cond:three} For each $\xi$ and $\xi''$ in $\cX$,
\[
\sum_{\xi'\in \cX} \kappa(\xi,\xi')\cdot \kappa(\xi',\xi'')=0.
\]
\end{enumerate}

For simplicity we typically write $\cX$ for $(\cX,\leq,\kappa,\dim)$.  
The partial order $\leq$ is the {\em face partial order}.   An element $\xi\in \cX$ is called a {\em cell} and $\dim \xi$ is the {\em dimension} of $\xi$. The set $\cX$ is graded with respect to $\dim$, i.e., $\cX = \bigsqcup_{n\in \N} \cX_n$ with $\cX_n = \dim^{-1}(n)$.   If $\xi\leq \xi'$ then $\xi'$ is a \emph{face} of $\xi$.  If in addition  $\dim (\xi') = \dim (\xi) - 1$ then $\xi' $ is a {\em primary face} of $\xi$.  The function $\kappa$ is the {\em incidence function} of the complex. The values of $\kappa$ are referred to as the {\em incidence numbers}.    
}
\end{defn}

\begin{rem}
Note that Theorems~\ref{thm:algmatching}--\ref{thm:global} hold when the field $\K$ is replaced by a principal ideal domain $\mathbf{R}$ and the appropriate change is made to the definitions of incidence relation and partial matching.\footnote{Namely, $\xi'\prec \xi$ if and only if $\kappa(\xi,\xi')=u$ for a unit $u\in \mathbf{R}$.}  However, working over fields both simplifies the exposition and allows for the use of iterative discrete Morse theory to compute both homology and Conley complexes, as in the algorithms $\HOM$ and $\CM$ of Section~\ref{sec:experiments}.
\end{rem}

The incidence function $\kappa$ engenders a binary relation, which we call the {\em incidence relation}.

\begin{defn}\label{defn:prelim:ir}
{\em
Let $\cX$ be a cell complex. 
The \emph{incidence relation} is the binary relation on $\cX$ denoted by $\prec$ and given as 
\[
\xi' \prec \xi \text{ if and only if } \kappa(\xi,\xi')\neq 0 .
\]
%The reflexive transitive closure of $\prec$,  denoted $\prec^*$, is a partial order, called the {\em incidence poset}.  
%Since $\kappa(\xi,\xi')\neq 0$ implies that $\xi'\leq \xi$, it is straightforward that the identity map $\id\colon (\cX,\prec^*)\to (\cX,\leq)$ is order-preserving.
}
\end{defn}

We introduce the concept of an \emph{incidence embedding}, which is a very weak notion of an injective map of cell complexes; in particular an incidence embedding need not induce a chain map.

\begin{defn}\label{defn:embedding}
{\em
Let $\cX$ and $\cX'$ be cell complexes. An {\em incidence embedding} $\iota\colon \cX\to \cX'$ is an injective map such that for all $\xi\in \cX$,
\[
\xi' \prec \xi \text{ if and only if } \iota(\xi')\prec \iota(\xi) .
\]
If there is an incidence embedding $\iota\colon \cX\to \cX'$ we say that $\cX$ is {\em embeddable} into $\cX'$, or simply that $\cX$ is \emph{$\cX'$-embeddable}.
}
\end{defn}

Let  $\cX\subset \cX'$ where $\cX'$ is a cell complex.  If $(\cX,\leq,\kappa,\dim)$, meaning the restrictions to $\cX$, is in itself a cell complex, then $\cX$ is called a \emph{subcomplex} of $\cX'$. In this case, the inclusion map $\cX\hookrightarrow \cX'$ is an incidence embedding, but does not necessarily  induce a chain map.

Given a cell complex $\cX$ the {\em associated chain complex $C_\bullet(\cX)$} is the chain complex $C_\bullet(\cX) = \{C_n(\cX)\}_{n\in\Z}$ where $C_n(\cX)$ is the vector space over $\K$ with basis elements given by the cells $\xi\in \cX_n$, i.e., 
\[
C_n(\cX) = \bigoplus_{\xi\in \cX_n} \K\langle \xi\rangle,
\]
and the boundary operator $\partial_n\colon C_n(\cX) \to C_{n-1}(\cX)$ is defined by
\[
\partial_n( \xi) := \sum_{\xi' \in \cX} \kappa(\xi, \xi')\xi'.
\]
Condition~(\ref{cond:three}) of Definition~\ref{defn:cellComplex} ensures $\partial_{n-1}\circ \partial_n = 0$.   Given a cell complex $\cX$ the {\em homology} of $\cX$, denoted $H_\bullet(\cX)$, is defined as the homology of the associated chain complex $H_\bullet(C_\bullet(\cX))$.

\subsection{Hypercube Complex}\label{sec:prelim:hypercube}

In this section we describe the \emph{hypercube complex}, which is a very useful cell complex.   Fix $n\geq 1$. Consider the group $\Z_2=\{0,1\}$ with operation addition modulo two and partial order $(\Z_2,\leq)$ where $\leq$ is the partial order generated by $0\leq 1$.  Denote by $\Z_2^n=\{0,1\}^n$ the direct product with component-wise addition modulo two. For a vector $\bx\in \Z_2^n$ we write $\bx =(x_i)$ with $x_i\in \{0,1\}$. The product partial order $\leq$ on $\Z_2^n$ is given by
\begin{equation}\label{eqn:product:order}
(x_1',\ldots,x_n')\leq (x_1,\ldots,x_n)  \text{ if and only if } x_i' \leq x_i \text{ for all } i.
\end{equation}
Let $\be_i=(e_j)$ denote the vector in $\Z_2^n$ such that $e_i = 1$ and $e_j=0$ for all $j\neq i$. 

\begin{defn}
{\em
The \emph{$n$-dimensional hypercube complex} (or $n$-cube complex), denoted $(\cH_n,\leq,\kappa,\dim)$, is the cell complex with underlying set of cells $\cH_n=\Z_2^n$. The partial order $\leq$ is the product order given in \eqref{eqn:product:order}. The incidence function $\kappa\colon \cH_n\times \cH_n\to \Z_2$ is given by 
\[
\kappa(\bx,\bx') = 
\begin{cases}
1 & \text{if } \bx' \text{ is a primary face of } \bx,\\
0 & \text{otherwise.}
\end{cases}
\]
The function $\dim\colon \cH_n\to \N$ is given by taking $\bx=(x_i)$ to 
\[
\dim (\bx) = \sum_{1\leq i \leq n} x_i .
\]

}
\end{defn}
\begin{rem}
Note that our $n$-cube complex is defined with the incidence numbers in the field $\Z_2$. This suffices for the purposes of the paper. \end{rem}

%Let {\bf 0} and {\bf 1} denote the vector of all 0's and 1's, respectively.  

%We often visualize the underlying poset $(\cH_n,\leq)$ in terms of a \emph{hypercube graph}.  %Hypercube graphs are ubiquitous in combinatorics and combinatorial topology. 

% \begin{defn}
% {\em
% The \emph{$n$-dimensional hypercube graph} (also called the $n$-cube graph), denoted $G(\cH_n)$, is an undirected graph with vertex set $V(\cH_n)=\Z_2^n = \{0,1\}^n$  and edge set $$E(\cH_n)=\{\bx\by : \bx+\by = \be_i\}.$$
% }
% If $\bx+\by = \be_i$ then edge $\bx\by$ is said to have direction $i$.
% \end{defn}

% The graph $G(\cH_n)$ is the undirected Hasse diagram of $(\cH_n,\leq)$.  Recall that the $n$-cube graph has $|V(\cH_n)|=2^n$ and $|E(\cH_n)|=n2^{n-1}$.  For implementation, it is often useful to consider the vectors of $\Z_2^n$ as bit strings, i.e., $\bx = (x_1,x_2,x_3) = (1,0,0)=100$; see Fig.~\ref{fig:3cube}.

\subsection{Cubical Complexes}\label{prelims:cubes}

Cubical complexes often arise in computational dynamics as grids on a phase space, e.g.~\cite{kmm}, and therefore have a particular importance  for Conley theory computations which we perform in Section~\ref{sec:braids}. In this section we give a brief review of cubical complexes which follows~\cite{kmm}.  An \emph{elementary interval} is a subset $I\subset \R$ of the form $I=[l,l+1]$ or $I=[l,l]$ for some $l\in \Z$.  An {\em elementary cube} $\xi$ in $\R^n$ is a finite product of elementary intervals, i.e., 
\[
\xi = I_1\times I_2\times \cdots \times I_n \subseteq \R^n .
\]
A {\em cubical set} is a union of elementary cubes.   Intervals of length zero are called {\em degenerate} while those of length 1 are {\em nondegenerate}. The {\em dimension} of a cube $\xi$, denoted $\dim (\xi)$, is the number of nondegenerate intervals in $\xi$. This furnishes a function $\dim\colon \cX\to \N$. The face poset is defined by $\xi' \leq \xi$ if $\xi'\subseteq \xi$.   Given a particular field $\K$, an incidence function $\kappa\colon \cX\times \cX\to \K$ can be described in detail~\cite{kmm}.   When $\K=\Z_2$ the incidence number $\kappa(\xi,\xi')$ is nonzero if and only if $\xi'$ is a primary face of $\xi$.

\begin{defn}
 {\em
 A {\em cubical complex} is a cubical set such that $(\cX,\leq,\kappa,\dim)$ form a cell complex.
 }
\end{defn}

% \begin{defn}\label{defn:extent}
% {\em
% We call the indices $\{1,\ldots,n\}$ the set of {\em coordinates}. An elementary cube $\xi = I_1\times I_2\times \cdots \times I_n \subseteq \R^n$ is said to have {\em extent} in coordinate $m$ if $I_m$ is nondegenerate. 
% }
% \end{defn}

\subsection{Discrete Morse Theory}\label{sec:dmt}

We review the use of discrete Morse theory to compute homology of complexes. Our exposition is brief and follows~\cite{focm,mn}. 

\begin{defn}\label{defn:acyclicmatching}
{\em
A {\em partial matching} of cell complex $\cX$ is an involution $w\colon \cX\to \cX$ that is subject to the following \emph{incidence trichotomy}:
\begin{enumerate}
  \item $w(\xi)=\xi$, or
     \item $\xi \prec w(\xi)$, or
     \item $ w(\xi) \prec \xi$.
\end{enumerate}
}
\end{defn} 

A partial matching $w$ engenders three sets, 
\[
A(w) =\{\xi \mid \xi = w(\xi)\}, \quad\quad 
Q(w) =\{\xi\mid \xi\prec w(\xi)\},  \quad\quad K(w) = \{\xi \mid w(\xi) \prec \xi   \}.
\]
% \[
% A(w)=\{\xi \mid w(\xi)=\xi\} \quad Q(w)=\{\xi \mid \kappa(w(\xi), \xi) \neq 0\}
% \quad K(w)=\{\xi \mid \kappa ( \xi, w(\xi) ) \neq 0 \} .
% \]

The sets $A(w),Q(w),K(w)$ partition $\cX$ and the restriction $w|_{Q(w)}\colon Q\to K$ is a bijection (hence the name partial matching), cf.~\cite{focm}.  There is a binary relation $\ll$ on $Q(w)$ defined by 
\[
\xi_1\ll \xi_0 \text{ if and only if } \xi_1 \prec w(\xi_0) .
%\kappa ( w(\xi_0), \xi_1)\neq 0.
\]

A partial matching $w$ is called \emph{acyclic} if the transitive closure $\ll^+$ of $\ll$ generates a partial order on $Q$. By convention, partial matchings are visualized as in Fig.~\ref{fig:ncube:matching}, with directed arrows $\xi\to \xi'$ if $\xi\in Q$, $\xi'\in K$ and $w(\xi)=\xi'$.

\begin{defn}
{\em 
Let $w$ be a partial matching.  A \emph{discrete flowline}, or simply \emph{flowline}, is a sequence of distinct cells
\[
\xi_0 \prec w(\xi_0) \succ \xi_1\prec w(\xi_1)\prec \ldots \succ \xi_n\prec w(\xi_n).
\]
}
\end{defn}

Note that if such a flowline exists if and only if  $\xi_n\ll^+ \xi_0$.

 \begin{rem}
 Partial matchings are sometimes called {\em discrete vector fields} and acyclic partial matchings are sometimes called {\em discrete gradient vector fields}.  In this language, a cell $\xi\in A(w)$ is called a {\em critical cell} and is a fixed point which respect to the flowlines.
 \end{rem}
 
 The upshot of discrete Morse theory, is that there is a chain complex $C_\bullet(w)$ which is defined using the critical cells and flowlines and is chain homotopy equivalent to $C_\bullet(\cX)$, see~\cite[Theorem 3.10]{focm}. In particular, $H_\bullet(C_\bullet(w))\cong H_\bullet(\cX)$.  The chain complex $C_\bullet(w)$ is called the \emph{Morse (chain) complex}.

\subsection{Graded Morse Theory}\label{sec:dmt:graded}

In this section, we review a graded version of discrete Morse theory.  %Let $\sP$ be a poset and $f\colon (\cX,\leq)\to \sP$ an order-preserving map.

%Consider a $\sP$-graded cell complex $(\cX,\nu)$.  Recall that the underlying set $\cX$ decomposes as $\cX=\bigsqcup_{p\in \sP} \cX^p$ where $\cX^p = \nu^{-1}(p)$.

\begin{defn}
{\em
Let $\sP$ be a poset and $f\colon (\cX,\leq)\to \sP$ an order-preserving map.  A partial matching is {\em $\sP$-graded}, or simply {\em graded}, if it satisfies the property that $w(\xi)=\xi'$ only if $\xi,\xi'\in \cX^p =f^{-1}(p)$ for some $p\in \sP$.    That is, matchings may only occur in the same fiber of the grading.  
}
\end{defn}

A $\sP$-graded acyclic partial matching produces a $\sP$-filtered chain equivalence (that is, a chain equivalence which respects the grading) between $C_\bullet(w)$ and $C_\bullet(\cX)$, when these are regarded as $\sP$-graded chain complexes, see~\cite{hms}, cf.~\cite{mn}.  In~\cite{hms} graded acyclic partial matchings are used to compute Conley complexes, and in~\cite{mn} they are used as a preprocessing step before the computation of persistent homology.

Similar ideas to graded matchings can be found many places in the literature, see~\cite{koz}. The basic idea is that one can assemble an acyclic partial matching on the complex $\cX$ from a collection of acyclic partial matchings on the fibers of an order-preserving map.  We restate~\cite[Patchwork Theorem]{koz} in our language.

\begin{thm}[Patchwork Theorem]\label{thm:dmt:patchwork}

Let $\sP$ be a poset and $f\colon (\cX,\leq)\to \sP$ an order-preserving map.  If $\{w^p\colon \cX^p\to \cX^p\}_{p\in \sP}$ is a collection of acyclic partial matchings the subcomplexes $\cX^p=f^{-1}(p)$, then $w\colon \cX\to \cX$, defined as
\[
w(\xi) = w^p(\xi) \text{ if and only if } \xi \in \cX^p,
\]
is a $\sP$-graded acyclic partial matching on $\cX$.
\end{thm}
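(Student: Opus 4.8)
The plan is to verify, in turn, that $w$ is a well-defined involution, that it is $\sP$-graded, that it obeys the incidence trichotomy, and finally --- the substantive step --- that it is acyclic. Since $f$ is a function, each cell $\xi\in\cX$ lies in exactly one fiber $\cX^p$ with $p=f(\xi)$, so $w(\xi):=w^p(\xi)$ is unambiguous. Because $w^p$ maps $\cX^p$ into itself, $w(\xi)=w^p(\xi)\in\cX^{f(\xi)}$, and applying $w$ a second time stays in the same fiber, where $w^p\circ w^p=\id$; hence $w$ is an involution, and by the very same observation $w(\xi)\in\cX^{f(\xi)}$ it is by construction $\sP$-graded.

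For the incidence trichotomy I would first record the compatibility of the incidence relations: since each $\cX^p$ is a subcomplex of $\cX$ (the data $\leq,\kappa,\dim$ being mere restrictions), for cells $\eta,\zeta\in\cX^p$ one has $\eta\prec\zeta$ in $\cX$ if and only if $\eta\prec\zeta$ in $\cX^p$. Consequently the trichotomy satisfied by $w^p$ on $\cX^p$ transfers verbatim to $w$ on $\cX$, the three sets split fiberwise as $Q(w)\cap\cX^p=Q(w^p)$ (and similarly for $A$ and $K$), and the relation $\ll$ of $w$ restricts on each $Q(w^p)$ to the relation $\ll$ of $w^p$.

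The heart of the argument is acyclicity, and the key lemma I would isolate is that $\ll$ is monotone with respect to $f$: if $\xi_1\ll\xi_0$, i.e.\ $\xi_1\prec w(\xi_0)$, then $\xi_1\leq w(\xi_0)$ in the face order, so order-preservation of $f$ yields $f(\xi_1)\leq f(w(\xi_0))=f(\xi_0)$, the final equality because $w$ is graded. Now suppose, toward a contradiction, that $\ll^+$ fails to be a partial order; then there is a $\ll$-cycle $\eta_0,\eta_1,\ldots,\eta_k=\eta_0$ with each consecutive pair $\ll$-related. Applying the monotonicity lemma around the cycle produces a closed chain of inequalities in $\sP$ that begins and ends at the same element, forcing $f(\eta_0)=f(\eta_1)=\cdots=f(\eta_k)$ by antisymmetry. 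Thus the entire cycle lies in a single fiber $\cX^p$, where, by the compatibility established above, it is a $\ll$-cycle for $w^p$ --- contradicting the acyclicity of $w^p$.

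The only genuine obstacle is this last step, and it rests entirely on the observation that order-preservation of $f$ together with gradedness prevents $\ll$ from ever moving strictly upward in $\sP$; once that is in hand, any putative cycle collapses into one fiber and the hypothesis that each $w^p$ is acyclic finishes the proof. Everything else is routine bookkeeping about restrictions to subcomplexes. I would therefore present the monotonicity of $\ll$ under $f$ as a standalone observation, since it is precisely the place where the hypothesis that $f$ is order-preserving enters.
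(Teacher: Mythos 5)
Your proof is correct. The paper itself gives no proof of this theorem, citing it instead from Kozlov's Patchwork Theorem, and your argument is the standard one: the only substantive point is exactly the monotonicity observation you isolate --- that gradedness plus order-preservation of $f$ makes $f$ non-increasing along $\ll$, so any $\ll$-cycle collapses into a single fiber, contradicting the acyclicity of that fiber's matching.
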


\section{The Mate Algorithm}\label{sec:pr}

In this section we describe a recursive algorithm -- \textsc{Mate} -- which takes as input a sequence of partial matchings $\bw = (w_1,w_2,\ldots,w_n)$ and derives a new partial matching which aggregates the matching information of $\bw$.  This leads to the proof of  Theorem~\ref{thm:algmatching}.   For the remainder of this section, let $\cX$ be a cell complex and $\bw = (w_1,w_2,\ldots,w_n)$ be a sequence of partial matchings on $\cX$.

\begin{defn}
{\em
A partial matching $w$ on $\cX$ is {\em $\bw$-compatible} if for each $\xi\in \cX$ there is an $i$ such that 
$w(\xi)=w_i(\xi)$.
}
\end{defn}

\begin{rem}
Note that each $w_i$ is an $\bw$-compatible matching, and therefore set of $\bw$-compatible matchings is nonempty.   
\end{rem}

Consider a $\bw$-compatible partial matching $w$.
Since $w$ is a partial matching there is a relation $\gg$ on $Q(w)$. In particular if $\xi_0\gg \xi_1$, then there exists a flowline
\[
\xi_0 \prec w(\xi_0)\succ \xi_1 \prec w(\xi_1).
\]
As $w$ is $\bw$-compatible there exists $i$ and $j$ such that
\[ 
w_i(\xi_0)=w(\xi_0) \text{ and } w_j(\xi_1) = w(\xi_1).
\]
We incorporate this information into the flowline via the following labeling,
\[
\xi_0 \stackrel{i}{\prec} w(\xi_0)\succ \xi_1 \stackrel{j}{\prec} w(\xi_1) .
\]

A notion stronger than compatibility is that of {\em stability} -- a matching which obeys the ordering of $\bw$. Stability forces the flowlines to obey a particular forcing condition, which becomes useful when proving acyclicity, see Theorem~\ref{thm:hypercube}.

% \begin{defn}
% {\em
% Let $w$ be an $\bw$-compatible partial matching. $w$ is {\em $\bw$-unstable} if there is a pair $\xi,\xi'\in \cX$ and triple $(i,j,j')$ such that
% \begin{enumerate}
%     \item $\alpha_i(\xi)=\xi'$,
%     \item $w(\xi)=\alpha_j(\xi)$,
%     \item $w(\xi')=\alpha_{j'}(\xi')$,
%     \item $i < j$ and $i<j'$.
% \end{enumerate}
% Otherwise, $w$ is $\bw$-{\em stable}.
% }
% \end{defn}

\begin{defn}\label{defn:stable}
{\em
Let $w$ be an $\bw$-compatible matching.  Consider $\xi_0\gg \xi_1$ with associated flowline
\[
\xi_0 \stackrel{j}{\prec} w(\xi_0)\succ \xi_1 \stackrel{j'}{\prec} w(\xi_1) .
\]
The pair $\xi_0\gg \xi_1$ is {\em $\bw$-unstable} if there exists $i$ such that
\begin{enumerate}
    \item $w(\xi_0)=w_i(\xi_1)$, and
    \item $i<j$ and $i<j'$.
\end{enumerate}
A matching is called {\em $\bw$-stable} if there are no $\bw$-unstable pairs.
}
\end{defn}

% \begin{rem}
% The definition for unstable above is equivalent to defining an unstable matching as one that has a pair $\xi,\xi'\in \cX$ and triple $(i,j,j')$ such that
% \[
% \alpha_i(\xi)=\xi',\quad w(\xi)=\alpha_j(\xi),\quad w(\xi')=\alpha_{j'}(\xi'),\quad\text{and}\quad i<j,i<j'
% \]
% % \begin{enumerate}
% %     \item $\alpha_i(\xi)=\xi'$,
% %     \item $w(\xi)=\alpha_j(\xi)$,
% %     \item $w(\xi')=\alpha_{j'}(\xi')$,
% %     \item $i < j$ and $i<j'$.
% % \end{enumerate}
% \end{rem}

\begin{prop}\label{prop:stablematching}

For any sequence $\bw = (w_1,w_2, \ldots,w_n)$ of partial matchings on $\cX$ there is a $\bw$-stable matching $w$ on $\cX$.

\end{prop}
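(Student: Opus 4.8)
The plan is to establish existence by exhibiting one such matching directly, via a greedy priority scheme that always favors matchings of lower index in $\bw$. Concretely, I would process $w_1,w_2,\ldots,w_n$ in order while maintaining a partially defined involution, initially empty. At stage $i$, for every pair $\{\xi,w_i(\xi)\}$ with $w_i(\xi)\neq\xi$ whose two cells are both still unmatched, I set $w(\xi):=w_i(\xi)$ and record a label $\ell(\xi)=\ell(w_i(\xi)):=i$; cells matched at an earlier stage are left untouched. After all $n$ stages, each still-unmatched cell $\xi$ is declared critical, $w(\xi):=\xi$. The label $\ell$ then realizes the flowline labelling appearing in Definition~\ref{defn:stable}.

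First I would verify that this $w$ is a partial matching: it is an involution by construction, and each matched pair $\{\xi,w(\xi)\}$ coincides with a pair of $w_{\ell(\xi)}$, so it inherits the incidence trichotomy $\xi\prec w(\xi)$ or $w(\xi)\prec\xi$ from the fact that $w_{\ell(\xi)}$ is itself a partial matching. For $\bw$-compatibility the key observation is that a cell $\xi$ can remain unmatched only if $w_1(\xi)=\xi$: at stage $1$ nothing is yet matched, so if $w_1(\xi)\neq\xi$ the pair $\{\xi,w_1(\xi)\}$ is matched immediately. Hence every critical cell satisfies $w(\xi)=\xi=w_1(\xi)$, while every matched cell satisfies $w(\xi)=w_{\ell(\xi)}(\xi)$; in either case $w(\xi)=w_i(\xi)$ for some $i$, so $w$ is $\bw$-compatible.

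The heart of the argument is stability, which I would prove by contradiction. Suppose $\xi_0\gg\xi_1$ is $\bw$-unstable, with flowline $\xi_0 \stackrel{j}{\prec} w(\xi_0)\succ \xi_1 \stackrel{j'}{\prec} w(\xi_1)$ and an index $i$ satisfying $w_i(\xi_1)=w(\xi_0)$ and $i<j$, $i<j'$; here $j=\ell(\xi_0)$ and $j'=\ell(\xi_1)$ are the stages at which $\xi_0,\xi_1\in Q(w)$ were matched. Since each cell is matched at most once and $w(\xi_0)$ is matched only at stage $j$ while $\xi_1$ is matched only at stage $j'$, both $w(\xi_0)$ and $\xi_1$ are still free at the start of stage $i<\min(j,j')$. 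Within stage $i$ the only $w_i$-pair containing either of them is $\{\xi_1,w(\xi_0)\}$, by the involution property of $w_i$, so the greedy rule matches $\xi_1$ to $w(\xi_0)$ at stage $i$. This assigns $\ell(\xi_1)=i$; but $\ell(\xi_1)=j'$ with $i<j'$, and $\ell$ is single-valued, a contradiction. Hence no $\bw$-unstable pair exists and $w$ is $\bw$-stable.

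The main obstacle I anticipate is this stability step, and specifically making rigorous the claim that at stage $i$ both $\xi_1$ and $w(\xi_0)$ are simultaneously free and that the processing order within stage $i$ is irrelevant — which follows because each of them lies in a \emph{unique} $w_i$-pair, namely $\{\xi_1,w(\xi_0)\}$. A secondary subtlety, already flagged above, is $\bw$-compatibility of the critical cells, which rests entirely on the observation that stage $1$ matches every non-fixed pair of $w_1$; the remaining checks (involution and incidence trichotomy) I expect to be immediate.
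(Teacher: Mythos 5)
Your proposal is correct and is essentially the paper's own proof: the greedy stage-by-stage scheme you describe is exactly the recursion $w_{\le i}(\xi)=w_i(\xi)$ when both $\xi$ and $w_i(\xi)$ lie in $A_{i-1}=A(w_{\le i-1})$, and your stability contradiction (both $\xi_1$ and $w(\xi_0)$ are still free at stage $i<\min(j,j')$, so $w_i$ would have matched them) is the paper's argument phrased with labels $\ell$ instead of the nested sets $A_0\supseteq A_1\supseteq\cdots\supseteq A_n$. Your explicit remark that critical cells must satisfy $w_1(\xi)=\xi$, which settles $\bw$-compatibility for fixed cells, is a detail the paper leaves implicit.
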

\begin{proof}
We inductively derive a sequence of partial matchings $(w_{\le 0},w_{\le 1}\ldots,w_{\le n})$.   First, set $w_{\leq 0}:= \id $. Let $A_{i} = A(w_{\leq i})$ and 
\begin{equation}\label{eqn:matching}
w_{\leq i}(\xi) = 
\begin{cases}
      w_i(\xi) & \text{if $\xi,w_i(\xi)\in A_{i-1}$}\\
      w_{\leq  i-1}(\xi) & \text{ otherwise.} 
\end{cases} 
\end{equation}
The matching of interest is 
\begin{equation}\label{eqn:final}
w:= w_{\leq n}.
\end{equation}

We now show that $w$ is a stable partial matching, beginning by proving that $w$ is a partial matching.
We proceed by induction on the sequence $(w_{\leq 0},\ldots,w_{\leq n})$.  

The base case is $w_{\le 0} = \id$, and thus $w_{\le 0}$ is a partial matching with $A_0 = \cX$.  Now we assume that $w_{\leq i-1}$ is a partial matching and we show that $w_{\le i}$ is also a partial matching.
This amounts to showing two things:  first, that $w_{\le i}$ is an involution, and second, that it obeys the incidence trichotomy.    
Note that by the inductive hypothesis we have that $w_{\leq i-1}^2=\id$.  If $\xi,w_i(\xi)\in A_{i-1}$ then
\[
w_{\leq i}(w_{\leq i}(\xi)) = w_{\leq i}(w_i(\xi)) = w_i^2(\xi) = \xi .
\]
Otherwise $w_{\le i}^2(\xi)=w_{\le i-1}^2(\xi)=\id$.  
Therefore $w$ is an involution.   
Now, if $\xi,w_i(\xi)\in A_{i-1}$ then $w_{\le i}(\xi)=w_i(\xi)$.  
As $w_i$ is a partial matching by hypothesis, the incidence trichotomy holds. 
Otherwise, $w_{\le i}(\xi)=w_{\le i-1}(\xi)$, and as $w_{\le i-1}$ is a partial matching by the inductive hypothesis, the incidence trichotomy holds. 
Therefore $w_{\le i}$ is a partial matching. 

It remains to show that $w$ is $\bw$-stable.  First, $w$ is $\bw$-compatible from \eqref{eqn:matching}.  Now suppose that $w$ had an unstable pair $\xi_0\gg\xi_1$ with associated flowline
\[
\xi_0 \stackrel{j}{\prec} w(\xi_0)\stackrel{i}{\succ} \xi_1 \stackrel{j'}{\prec} w(\xi_1),
\]
where $w_i(\xi_1)=w(\xi_0)$ and $i<j,j'$.  Let $\xi' = w(\xi_0)$ and $\xi'' = w(\xi_1)$.  As $w_i(\xi_1)=\xi'$ but $w(\xi_1)\neq \xi'$ it follows from~\eqref{eqn:matching} that either $\xi_1\not\in A_{i-1}$ or $\xi'\not\in A_{i-1}$.  That $\xi'=w(\xi_0)=w_j(\xi_0)$ implies $\xi_0,\xi'\in A_{j-1}$.  Similarly, $\xi'' = w(\xi_1)=w_{j'}(\xi_1)$ implies $\xi_1,\xi''\in A_{j'-1}$.  By~\eqref{eqn:matching} there is a descending sequence
\[
\cX=A_0\supseteq A_1 \supseteq\ldots \supseteq A_n.
\]
By assumption $i<j,j'$, which implies $A_j,A_{j-1}\subseteq A_{i-1}$.  Therefore $\xi_1,\xi'\in A_{i-1}$, which is a contradiction. Therefore $w$ is $\bw$-stable.
\end{proof}

\begin{rem}
The terminology of unstable/stable comes from the similarity of Definition~\ref{defn:stable} to the stable marriage problem~\cite{stbmrg:knuth}.  In $\Mate$ we take a greedy approach, however this similarity may help in designing better algorithms.  We leave this to future work.
\end{rem}
% \begin{rem}
% As is shown by the example in Figure~\ref{fig:nonunique} stable partial matchings need not be unique.
% \end{rem}

% \begin{figure}[h!]
%     \centering
% \begin{tikzpicture}[dot/.style={draw,circle,fill,inner sep=1.25pt},line width=.7pt]
% \foreach \x in {0,1,2}
%     \foreach \y in {0,.5,1}
%         \node (\x\y) at (\x,\y)[dot] {};
% \foreach \x in {4,5,6}
%     \foreach \y in {.25,.75}
%         \node (\x\y) at (\x,\y)[dot] {};
% \foreach \y in {0,.5,1}
%     \draw (0,\y) -- (2,\y);
% \foreach \y in {.25,.75}
%     \draw (4,\y) -- (6,\y);
% \draw[-latex,line width=1pt](0,1)--(.5,1);
% \draw[-latex,line width=1pt](2,1)--(1.5,1);
% \draw[-latex,line width=1pt](1,.5)--(.5,.5);
% \draw[-latex,line width=1pt](1,0)--(1.5,0);
% \foreach \x in {4,5}
%     \draw[-latex,line width=1pt](\x,.75)--(\x+.5,.75);
% \foreach \x in {5,6}
%     \draw[-latex,line width=1pt](\x,.25)--(\x-.5,.25);
% \node at (2.25,0.15) {$w_3$};
% \node at (2.25,.65){$\alpha_2$};
% \node at (2.25,1.15) {$\alpha_1$};
% \node at (3.75,.85) {$w_1$};
% \node at (3.75,.35) {$w_2$};
% \end{tikzpicture}
% \caption{Sequence of matchings $\bw=(\alpha_1,\alpha_2,\alpha_3)$.  Both $w_1$ and $w_2$ are stable matchings with respect to $\alpha$.
% }
% \label{fig:nonunique}
% \end{figure}
% \begin{figure}[h!]
%     \centering
%     \includegraphics[scale=.25]{images/nonunique.png}
%     \caption{Stable matchings $w_1,w_2$ for $\bw=(\alpha_1,\alpha_2,\alpha_3)$.}
%     \label{fig:nonunique}
% \end{figure}

The construction of the stable matching $w$ as defined in Eqn.~\eqref{eqn:final} can be expressed as the recursive algorithm, $\textsc{Mate}(\cdot,\bw)$. Note that $\textsc{MateHelper}$ is a helper function which performs the recursion and streamlines the notation for $\Mate$. 

% \begin{alg}
% {\em
    \begin{algorithmic}
 \label{alg:mate}
    \Function{Mate}{$\xi,\bw=(w_1,w_2,\ldots,w_n$)}
        \State \Return \textsc{MateHelper}$(\xi,n,\bw)$
    \EndFunction
    \Function{MateHelper}{$\xi$, $i$, $\bw$}
        \If{$i=0$}
            \State \Return $\xi$
        \EndIf
        \State $\xi' \gets \textsc{MateHelper}(\xi,i-1,\bw)$
        \If {$\xi'=\xi$}
            \If {$\textsc{MateHelper}(w_i(\xi),i-1,\bw)=w_i(\xi)$ }
                \State \Return $w_i(\xi)$
            \EndIf
        \EndIf
        \State \Return $\xi'$
    \EndFunction
    \end{algorithmic}
% }
% \end{alg}

% \begin{prop}\label{prop:algmatching}
% Let $\bw = (w_i)$ be a sequence of partial matchings on a cell complex $\cX$.
% Then, $w(\cdot):=\textsc{Mate}(\cdot,\bw)$ is an $\bw$-stable partial matching on $\cX$.
% \end{prop}
% \begin{proof}
% Set $w_{\le i}(\cdot):= \textsc{MateHelper}(\cdot,i,\bw)$.  Notice that $w_0 = \id$.  It follows from the proof of Proposition~\ref{prop:stablematching} that $w$ is an $\bw$-stable partial matching.
% \end{proof}

% \begin{prop}
% Let $\bw = (\alpha_1,\ldots, \alpha_n)$ be a sequence of partial matchings on a cell complex $\cX$.
% Let $\xi\in \cX$. The algorithm $\textsc{Mate}(\xi,\bw)$ executes in $O(2^n)$ time.
% \end{prop}
% \begin{proof}
% We prove this by induction on $n$. In the worst case, the two {\bf if} statements are true at each level of recursion.   We define a function $T$ for the complexity of the algorithm in the worst case. For $n=0$, $\xi$ is returned. Set $T(0) = c_1$. and $T(n) = c_2+2T(n-1)$.  The closed form of this recurrence relation is $T(n) = 2^nc_1 + c_2\sum_{i=0}^{n-1} 2^i = 2^n(c_1+c_2)-c_2 = O(2^n)$.
% \end{proof}

\begin{proof}[Proof of Theorem~\ref{thm:algmatching}]
Notice $\textsc{MateHelper}(\cdot,i,\bw)=w_{\le i}(\cdot)$.  In particular, $w_{\leq 0} = \id = \textsc{MateHelper}(\cdot,0,\bw)$ and $w(\cdot)=\Mate(\cdot,\bw)$ is the matching in~\eqref{eqn:final}.  It follows from the proof of Proposition~\ref{prop:stablematching} that $w$ is a $\bw$-stable partial matching.

We prove the time complexity bound by induction on $n$. In the worst case, the two {\bf if} statements are true at each level of recursion.   We define a function $T$ for the complexity of the algorithm in the worst case. For $n=0$, $\xi$ is returned. Set $T(0) = c_1$ and $T(n) = c_2+2T(n-1)$.  The closed form of this recurrence relation is $T(n) = 2^nc_1 + c_2\sum_{i=0}^{n-1} 2^i = 2^n(c_1+c_2)-c_2 = O(2^n)$.
\end{proof}

\subsection{Algorithm for Graded Complexes}

In the graded setting, the input is a cell complex $\cX$, a poset $\sP$, and an order-preserving map $f\colon (\cX,\leq)\to \sP$. We may adapt the algorithm \textsc{Mate} to the graded case, which we call $\GMate$, by using a sequence of $\sP$-graded partial matchings, recall from Section~\ref{sec:dmt:graded} that a partial matching is {\em $\sP$-graded} if two cells are matched only if they belong to the same fiber.

\begin{prop}\label{prop:gradedmate}
Let $\cX$ be a cell complex, $\sP$ be a poset and $f\colon (\cX,\leq)\to \sP$ be an order-preserving map.  If $\bw=(w_1,w_2,\ldots,w_n)$ is a sequence of $\sP$-graded partial matchings then $w(\cdot)=\Mate(\cdot,\bw)$ is a $\sP$-graded partial matching.

\end{prop}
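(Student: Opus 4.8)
The plan is to reduce everything to the single structural fact that $\Mate$ always returns a $\bw$-compatible matching, and then observe that compatibility transports the grading property from the individual $w_i$ to the aggregate $w$. Concretely, I would first invoke Theorem~\ref{thm:algmatching} to conclude that $w(\cdot) = \Mate(\cdot,\bw)$ is a $\bw$-stable partial matching on $\cX$; in particular it is an honest partial matching and, by Definition~\ref{defn:stable}, it is $\bw$-compatible. This is the only nontrivial input needed, and it is already in hand, so the remaining work is essentially definitional.

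Next I would unwind $\bw$-compatibility: for every cell $\xi\in\cX$ there exists an index $i$ (depending on $\xi$) such that $w(\xi)=w_i(\xi)$. The goal is to verify the defining condition of a $\sP$-graded matching, namely that $w(\xi)=\xi'$ forces $\xi,\xi'$ to lie in a common fiber $\cX^p=f^{-1}(p)$. Fix $\xi$ and choose such an $i$. If $w(\xi)=\xi$ the condition is vacuous. Otherwise $w(\xi)=w_i(\xi)\neq\xi$, and since by hypothesis each $w_i$ is $\sP$-graded, the match $w_i$ only pairs cells within a single fiber; hence $f(w_i(\xi))=f(\xi)$, i.e. $\xi$ and $w(\xi)=w_i(\xi)$ belong to the same $\cX^p$. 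As $\xi$ was arbitrary, $w$ pairs cells only inside fibers, which is exactly the statement that $w$ is $\sP$-graded.

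I do not anticipate a genuine obstacle here; the content is entirely carried by Theorem~\ref{thm:algmatching}. The one point that deserves explicit articulation is that $\bw$-compatibility is preserved by $\Mate$ regardless of acyclicity or stability considerations, so that the grading of the outputs $w_i(\xi)$ is inherited verbatim by $w(\xi)$. In writing this up I would present it as a one-paragraph argument: state that $w$ is a partial matching and is $\bw$-compatible by Theorem~\ref{thm:algmatching}, then run the case split on whether $w(\xi)=\xi$ and use the $\sP$-gradedness of the chosen $w_i$ to place $\xi$ and $w(\xi)$ in the same fiber. No induction on $n$ is required, in contrast to Proposition~\ref{prop:stablematching}, because the grading property is a pointwise (per-cell) condition that compatibility already localizes to a single constituent matching.
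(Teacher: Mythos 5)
Your argument is correct and is essentially identical to the paper's proof: both invoke Theorem~\ref{thm:algmatching} to get that $w=\Mate(\cdot,\bw)$ is a ($\bw$-stable, hence) $\bw$-compatible partial matching, and then observe that compatibility hands the pairing $w(\xi)=w_i(\xi)$ to a single $\sP$-graded constituent $w_i$, which forces $\xi$ and $w(\xi)$ into the same fiber. The paper states this in two sentences; your slightly more explicit case split adds nothing that needs changing.
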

\begin{proof}
If follows from the proof of Theorem~\ref{thm:algmatching} that $w$ is $\bw$-compatible, thus for $\xi\in \cX^p$ there is an $i$ such that $w(\xi)=w_i(\xi)$.  As each $w_i$ is $\sP$-graded, it follows that $w$ is $\sP$-graded.  
\end{proof}

% We adapt our sequence $\alpha_i$ from~\eqref{eqn:hypercube:alpha} to the graded case as follows:
% \[
% \beta_i(\xi) =
% \begin{cases}
%   \cM_i(\xi) & \text{if $\nu(\xi)=\nu(\cM_i(\xi))$}\\
%   \xi & \text{otherwise.}
% \end{cases}
% \]

% The algorithm $\GMate$ is precisely the algorithm $\Mate$ with the sequence of graded partial matchings $\bbeta = (\beta_i)$, i.e. $\textsc{Graded-Mate}(\cdot) = \Mate(\cdot,\bbeta)$.

% \begin{prop}
% Let $(\cX,\nu)$ be a $\sP$-graded cubical complex in $\R^n$. The function $w(\cdot)=\GMate(\cdot)$ is a $\sP$-graded acyclic partial matching.
% \end{prop}

% \begin{proof}
% The fibers $\cX^p=\nu^{-1}(p)$ partition the underlying set $\cX$ as $\cX=\bigsqcup \cX^p$. Each fiber $\cX^p$ may be regarded as a subcomplex by restricting $(\leq,\kappa,\dim)$ to $\cX^p$.   The algorithm of $\GMate$ is equivalent to applying $\Mate$ on each fiber $\cX^p$.  It follows from Corollary~\ref{cor:acyclic} that this furnishes an acyclic partial matching for each fiber $\cX^p$.  It follows from Theorem~\ref{thm:dmt:patchwork}, that taken together these give an acyclic partial matching for the entire cell complex $\cX$.
% \end{proof}

\section{Templates for the Hypercube Complex }\label{sec:hypercube}

Recall from the introduction that the templates for the hypercube complexes are the functions $\cM_i\colon \cH_n\to \cH_n$ defined as
 \[
\cM_i(\bx) = \bx+\be_i,
 \]
where $+$ denotes the group operation in $\Z_2^n$, see Fig.~\ref{fig:ncube:matching}.  It is straightforward that $\cM_i$ is an involution, obeys the incidence trichotomy and is acyclic.  Finally, note that $A(\cM_i) = \varnothing$.  For implementation, it is often useful to consider the vectors of $\Z_2^n$ as bit strings; in that case the function $\cM_i$ is a NOT operation on the $i$th bit from the left.  

A crucial observation for this paper is that incidence embeddings can be used to pull back partial matchings. This allows for the templates to be used to create partial matchings for any $\cH_n$-embeddable complex $\cX$ via Theorem~\ref{thm:hypercube}, see Fig.~\ref{fig:subncubematch}.

\begin{prop}\label{prop:incidenceembed}
 Let $\cX$ and $\cX'$ be cell complexes and let $\iota\colon \cX\to \cX'$ be an incidence embedding.  If $w'\colon \cX'\to \cX'$ is a partial matching, then 
 \begin{equation}\label{eqn:incidenceembed}
w(\xi) =
\begin{cases}
   \iota^{-1}(w'(\iota (\xi))) &\text{ if $w'(\iota(\xi))\in \iota(\cX)$},\\
    \xi      & \text{otherwise,}
\end{cases}
\end{equation}
is a partial matching on $\cX$.  If $w'$ is acyclic then $w$ is acyclic.
 \end{prop}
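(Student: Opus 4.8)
The plan is to verify the two claims of Proposition~\ref{prop:incidenceembed} separately: first that $w$ as defined in \eqref{eqn:incidenceembed} is a partial matching on $\cX$, and second that acyclicity of $w'$ transfers to $w$. Throughout, the guiding principle is that an incidence embedding $\iota$ preserves and reflects the incidence relation $\prec$, so that any structural property of $w'$ expressible purely in terms of $\prec$ should pull back along $\iota^{-1}$ wherever $\iota^{-1}$ is defined.

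First I would check that $w$ is a partial matching, i.e., an involution obeying the incidence trichotomy. For the involution property, I consider a cell $\xi\in\cX$ and split on whether $w'(\iota(\xi))\in\iota(\cX)$. In the ``otherwise'' case $w(\xi)=\xi$ and applying $w$ again fixes $\xi$, so $w^2(\xi)=\xi$. In the first case, set $\eta=\iota(\xi)$ so that $w(\xi)=\iota^{-1}(w'(\eta))$; I must confirm that $w'(\iota(w(\xi)))=w'(w'(\eta))=\eta$ again lands in $\iota(\cX)$ (it equals $\eta=\iota(\xi)\in\iota(\cX)$), so the first branch triggers again and $w(w(\xi))=\iota^{-1}(\eta)=\xi$. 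The one subtlety is the degenerate subcase where $w'(\eta)=\eta$: then $w(\xi)=\iota^{-1}(\eta)=\xi$, consistent with the trichotomy's fixed-point option. For the incidence trichotomy, when $w(\xi)\neq\xi$ we have $w(\xi)=\iota^{-1}(w'(\iota(\xi)))$ with $w'(\iota(\xi))\neq\iota(\xi)$; since $w'$ is a partial matching, either $\iota(\xi)\prec w'(\iota(\xi))$ or $w'(\iota(\xi))\prec\iota(\xi)$, and because $\iota$ reflects $\prec$ (Definition~\ref{defn:embedding}), the corresponding relation $\xi\prec w(\xi)$ or $w(\xi)\prec\xi$ holds in $\cX$.

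For acyclicity, I would argue that the relation $\ll$ on $Q(w)$ pulls back the relation $\ll'$ on $Q(w')$. Suppose $\xi_1\ll\xi_0$ in $\cX$, meaning $\xi_1\prec w(\xi_0)$ with $\xi_0,\xi_1\in Q(w)$. By definition of $w$, both $\xi_0$ and $\xi_1$ lie in the first branch, so $w(\xi_j)=\iota^{-1}(w'(\iota(\xi_j)))$; applying $\iota$ and using that $\iota$ preserves $\prec$, I get $\iota(\xi_1)\prec w'(\iota(\xi_0))$, i.e. $\iota(\xi_1)\ll'\iota(\xi_0)$ in $\cX'$. Thus $\iota$ carries any $\ll$-chain in $Q(w)$ to a $\ll'$-chain in $Q(w')$, and injectivity of $\iota$ ensures distinct cells map to distinct cells. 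A cyclic chain $\xi_0\ll\cdots\ll\xi_0$ in $Q(w)$ would therefore produce a cyclic chain in $Q(w')$, contradicting acyclicity of $w'$; hence the transitive closure $\ll^+$ is a partial order on $Q(w)$ and $w$ is acyclic.

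I expect the main obstacle to be the bookkeeping in the involution argument rather than any deep idea: one must be careful that $\iota^{-1}$ is only defined on $\iota(\cX)$, so every time the formula is applied the membership condition $w'(\iota(\cdot))\in\iota(\cX)$ has to be re-verified. The key enabling fact is simply that $\iota(w(\xi))=w'(\iota(\xi))$ whenever $w(\xi)\neq\xi$, after which every claim reduces to transporting the $\prec$-relations back and forth through the biconditional in Definition~\ref{defn:embedding}. Since $\iota$ both preserves and reflects $\prec$, the structural properties defining a partial matching and its acyclicity are genuinely intrinsic to the incidence relation and survive the pullback.
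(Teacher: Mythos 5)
Your proposal is correct and follows essentially the same route as the paper: the same case split for the involution, the same appeal to the biconditional in Definition~\ref{defn:embedding} for the trichotomy, and the same push-forward of a hypothetical cycle (you phrase it via $\ll$-chains, the paper via flowlines, which are equivalent). Your explicit re-verification that the first branch of \eqref{eqn:incidenceembed} triggers again when computing $w(w(\xi))$ is a small point the paper's composition-of-maps calculation leaves implicit, but it is the same argument.
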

 \begin{proof}
 The map $w$ is an involution since if $w'(\iota(\xi))\in \iota(\cX)$ then
 \[
w\circ w (\xi) = 
 \iota^{-1}\circ w' \circ \iota \circ \iota^{-1} \circ w' \circ \iota (\xi) = 
  \iota^{-1}\circ w'  \circ w' \circ \iota (\xi) =
  \iota^{-1}  \circ \iota(\xi) =
   \xi.
 \]
 Otherwise, $w(w(\xi))=w(\xi)=\xi$.  
 Moreover, as $\iota$ is an incidence embedding, it follows that $w$ obeys the incidence trichotomy.     Now assume $w'$ is acyclic.  
 If $w$ is not acyclic then for some $\xi_0\in \cX$, we have $\xi_0 \gg^+ \xi_0$, i.e., there is a sequence $\xi_0\gg \xi_1 \gg\ldots \gg \xi_n \gg \xi_0$ in $\cX$.  This corresponds to a flowline
 \[
 \xi_0 \prec w(\xi_0) \succ \xi_1\prec w(\xi_1)\succ \ldots \succ \xi_n \prec w(\xi_n)\succ \xi_0.
 \]
 As $\iota$ is an incidence embedding, this flowline can be pushed forward to a flowline for $w'$ in $\cX'$:
  \[
 \iota (\xi_0) \prec \iota(w(\xi_0)) \succ \iota(\xi_1) \prec \iota( w(\xi_1)) \succ \ldots \succ \iota(\xi_n) \prec \iota(w(\xi_n))\succ \iota(\xi_0).
 \]
 However, this is a contradiction as $w'$ is acyclic.  Therefore $w$ is acyclic.
 \end{proof}
 
\begin{figure}[h!]
\begin{minipage}{.48\textwidth}
    \centering
    \begin{tikzpicture}
 \tikzstyle{vertex}=[circle,minimum size=20pt,inner sep=0pt]
 \tikzstyle{edge} = [draw,thick,-,black]
  \tikzstyle{medge} = [draw,dotted,black]
\def\x{1.5}
\def\y{1}
%x,%y,%depth
\node[vertex] (0) at (0,0) {$000$};

\node[vertex] (100) at (\x,\y) {$100$};
\node[vertex] (001) at (0,\y) {$001$};
\node[vertex] (010) at (-\x,\y) {$010$};

\node[vertex] (110) at (0,2*\y) {};
 \node[vertex] (011) at (-\x,2*\y) {$011$};
 \node[vertex] (101) at (\x,2*\y) {$101$};

 \node[vertex] (111) at (0,3*\y) {};
 
 \draw[edge,dotted] (011)--(111);
 \draw[edge,dotted] (101)--(111);
 \draw[edge,dotted] (110)--(111);
 \draw[edge] (0)--(010)--(011);
 \draw[edge, dotted] (0)--(010)--(110);
 \draw[edge] (0)--(001)--(011);
 \draw[edge] (0)--(001)--(101);
 \draw[edge] (0)--(001)--(101);
 \draw[edge] (0)--(100)--(101);
 \draw[edge, dotted] (100)--(110);
\end{tikzpicture}

(a)
\end{minipage}
\begin{minipage}{.48\textwidth}
    \centering
    \begin{tikzpicture}
 \tikzstyle{vertex}=[circle,minimum size=20pt,inner sep=0pt]
 \tikzstyle{edge} = [draw,thick,-,black]
  \tikzstyle{medge} = [draw,dotted,black]
\def\x{1.5}
\def\y{1}
%x,%y,%depth
\node[vertex] (0) at (0,0) {$000$};

\node[vertex] (100) at (\x,\y) {$100$};
\node[vertex] (001) at (0,\y) {$001$};
\node[vertex] (010) at (-\x,\y) {$010$};

\node[vertex] (110) at (0,2*\y) {};
 \node[vertex] (011) at (-\x,2*\y) {$011$};
 \node[vertex] (101) at (\x,2*\y) {$101$};

 \node[vertex] (111) at (0,3*\y) {};
 
 \draw[edge,dotted] (011)--(111);
 \draw[edge,dotted] (101)--(111);
 \draw[edge,dotted] (110)--(111);
 \draw[edge] (0)--(010);
 \draw[edge] (0)--(010);
 \draw[edge,dotted] (010)--(110);
 \draw[edge] (0)--(001)--(011);
 \draw[edge] (0)--(001);
 \draw[edge] (100)--(101);
 \draw[edge, dotted] (100)--(110);
 
  \draw[-latex,line width=.5pt] (0)--(100);
 \draw[-latex,line width=.5pt] (001)--(101);
  \draw[-latex,line width=.5pt] (010)--(011);

 \end{tikzpicture}

(a)
\end{minipage}
 \caption{(a) A subcomplex $\cX$ of $\cH_3$. (b) The matching $\textsc{Mate}(\cdot,\balpha=(\alpha_1, \alpha_2, \alpha_3))$, with $\alpha_i$ given by \eqref{eqn:hypercube:alpha}, on $\cX$.
 }   \label{fig:subncubematch}
\end{figure}

\begin{proof} [Proof of Theorem~\ref{thm:hypercube}]
Let $\balpha=(\alpha_1,\alpha_2,\ldots,\alpha_n)$ where $\alpha_i$ is defined as in \eqref{eqn:hypercube:alpha}.  It follows from Proposition~\ref{prop:incidenceembed} that each $\alpha_i$ is an acyclic partial matching.  It follows  from Theorem~\ref{thm:algmatching} that $w=\Mate(\cdot,\balpha)$ is an $\balpha$-stable partial matching.  It remains to show that $w$ is acyclic. This will follow from the structure of the $n$-cube. Suppose that $w$ is not acyclic and that there is a sequence $\xi_0\gg \xi_1 \gg\ldots \gg \xi_n \gg \xi_0$ in $\cX$. This corresponds to a flowline
 \[
 \xi_0 \stackrel{i_0}{\prec} w(\xi_0) \succ x_1\stackrel{i_1}{\prec} w(\xi_1)\succ 
 \ldots \succ \xi_n\stackrel{i_n}{\prec}
 w(\xi_n)\succ \xi_0,
 \]
 where $\alpha_{i_k}(\xi_k) = w(\xi_k)$.
Set $\bx_i=\iota(\xi_i)$ and $\by_i=\iota(w(\xi_i))$.  Since $\iota$ is an incidence embedding there is a corresponding zig-zag path in the hypercube $\cH_n$
\[
\bx_0 \stackrel{i_0}{\prec} \by_0
\stackrel{j_0}{\succ} \bx_1 \stackrel{i_1}{\prec} \by_1
\stackrel{j_1}{\succ} \ldots
\stackrel{j_{n-1}}{\succ} \bx_n 
\stackrel{i_n}{\prec} \by_n
\stackrel{j_n}{\succ} \bx_0 .
\]
From the definition of $\balpha$, for each `zig', $\bx_k \stackrel{i_k}{\prec} \by_k$, we have that $\cM_{i_k}(\bx_k)=\by_k$, which corresponds to flipping the bit $x_{i_k}$ from 0 to 1.  Each `zag', $\by_k \succ \bx_{k+1}$, corresponds to flipping a bit from 1 to 0, thus may also be labeled with an appropriate coordinate $j_k$ where $\by_k = \cM_{j_k}(\bx_{k+1}).$
% Every `zig', i.e. $\prec$, flips a bit from 0 to 1;  and therefore may be labeled by the appropriate coordinate $i_k$, i.e., $\cM_{i_k}(\bx_k) = \by_k$, which corresponds to flipping the bit $x_{i_k}$ from 0 to 1.  Each `zag', i.e. $\succ$, corresponds to flipping a bit from 1 to 0, thus may also be labeled with an appropriate coordinate $j_k$, i.e., $\by_k = \cM_{j_k}(\bx_{k+1}).$

Let $i^*=\min\{i_0,\ldots,i_n\}$, i.e., the most preferred bit for the `zigs', and which corresponds to a flip of the bit $x_{i^*}$ from 0 to 1.   Since the zig-zag path is a loop, there must be a corresponding `zag' in the sequence through the $i^*$th bit, i.e., where the bit $x_{i^*}$ is flipped from 1 to 0.  Thus there is some $k$ where (modulo $n+1$)
\[
\bx_k \stackrel{i_k}{\prec} \by_k \stackrel{i^*}{\succ} \bx_{k+1} \stackrel{i_{k+1}}{\prec} \by_{k+1} .
\]
As the cells of a flowline are distinct, and $\iota$ is an injective map, we must have that $i^*< i_k, i_{k+1}$.  This in turn implies that $\xi_{k+1}\gg \xi_k$ is an $\balpha$-unstable pair, which contradicts the fact that $w$ is $\balpha$-stable.  Therefore there can be no such zig-zag path, implying that $w$ is acyclic.

% Given a sequence $\bx_0 \gg \bx_1 \gg \ldots \gg \bx_n \gg \bx_0$
% there is a corresponding flowline (i.e., zig-zag path) in the hypercube $H$
% \[
% \bx_0 \stackrel{i_0}{\prec} w(\bx_0) \stackrel{j_0}{\succ} \bx_1 \stackrel{i_1}{\prec} w(\bx_1) \stackrel{j_1}{\succ} \ldots
% \stackrel{j_{n-1}}{\succ} \bx_n 
% \stackrel{i_n}{\prec} w(\bx_n)
% \stackrel{j_n}{\succ} \bx_0 .
% \]
% First, every `zig', i.e. $\prec$, flips a bit from 0 to 1 and therefore may be labeled the appropriate coordinate, i.e., $\alpha_{i_k}(\bx_k) = w(\bx_k)$; corresponds to flipping the the bit $x_{i_k}$ bit from 0 to 1.  Each `zag', i.e. $\succ$, corresponds to flipping a bit from 1 to 0, thus may also be labeled with an appropriate coordinate, i.e., $w(\bx_k) = \alpha_{j_k}(\bx_{k+1}).$

% Let $i^*=\min\{i_0,\ldots,i_n\}$, i.e., the most preferred coordinate for the `zig'.  This is a flip of $x_{i^*}$ from 0 to 1.   Since the zig-zag path is a loop, there must be a corresponding `zag' in the sequence through the coordinate $i^*$, i.e., where the $i^*$ bit is flipped from 1 to 0.  Thus there is some $k$ where
% \[
% \bx_k \stackrel{i_k}{\prec} w(\bx_k) \stackrel{i^*}{\succ} \bx_{k+1} \stackrel{i_{k+1}}{\prec} w(\bx_{k+1}) .
% \]
% However, this would imply that $i^*< i_k, i_{k+1}$, which implies $\bx_k\ll \bx_{k+1}$ is an $\balpha$-unstable pair.  This contradicts the fact that $w$ is $\balpha$-stable.  Therefore there can be no such zig-zag path, implying that $w$ is acyclic.
\end{proof}

\begin{rem}
Given the acyclic partial matching $w(\cdot) = \Mate(\cdot, \balpha)$ there are algorithms which  determine the boundary operator $\partial_\bullet^{C(w)}$ and thus the chain complex $C_\bullet(w)$. Our implementation~\cite{cmcode} uses~\cite[Algorithm 3.12]{focm}.
\end{rem}

\begin{rem}
Let $f\colon (\cX,\leq) \to \sP$ be an order-preserving map. 
We can adapt the templates of \eqref{eqn:hypercube:alpha} to the graded case as follows.  Define $\beta_i\colon \cX\to \cX$ as
\[
\beta_i(\xi) =
\begin{cases}
  \alpha_i(\xi) & \text{if $f(\xi)=f(\alpha_i(\xi))$},\\
  \xi & \text{otherwise.}
\end{cases}
\]
As each $\alpha_i$ is an acyclic partial matching, it is straightforward that each $\beta_i$ is a $\sP$-graded acyclic partial matching.  It follows from Proposition~\ref{prop:gradedmate} that $\Mate(\cdot,\bbeta)$ is a $\sP$-graded partial matching, and it follows from Theorem~\ref{thm:hypercube} that $\Mate(\cdot,\bbeta)$ is acyclic.
\end{rem}

\section{Templates for Cell Complexes}\label{sec:general}

The goal of this section is to prove Theorem~\ref{thm:global}.  This theorem formalizes a procedure to decompose a cell complex $\cX$, each  part of which is $\cH_n$-embeddable for some $n$, and on which the hypercube templates and the \textsc{Mate} algorithm may be applied to yield a global acyclic partial matching. This can be done by organizing the decomposition as the fibers of an order-preserving map. The rest of the section demonstrates how this can be applied to the setting of cubical and simplicial complexes.

\begin{proof}[Proof of Theorem~\ref{thm:global}]
As each $\cX^p$ is $\cH_n$-embeddable for some $n$, it follows from Theorem~\ref{thm:hypercube} that for each $p\in \sP$, $\Mate(\cdot, \balpha_p)$ is an acyclic partial matching on $\cX^p$, where $\balpha_p$ is given in \eqref{eqn:hypercube:alpha}. It follows from  Theorem~\ref{thm:dmt:patchwork} that $w$ is an acyclic partial matching on $\cX$.
\end{proof}

\subsection{Templates for Cubical Complexes}\label{sec:general:cubical}

Let $\cC(m;d)$ be the cubical complex consisting of every elementary cube in $[0,m]^d\subset \R^d$.  The cubical complex $\cC(2;2)$ in $\R^2$ is depicted in Fig.~\ref{fig:cubical:2dcubical}(a).  

\begin{figure}[h!]
\begin{minipage}{.5\textwidth}
\centering
\begin{tikzpicture}[dot/.style={draw,circle,fill,inner sep=1.5pt},line width=.7pt]
\fill[gray!80,fill opacity=.2](0,0) rectangle (2,2);
% \fill[pattern=north east lines](0,0) rectangle (3,2);
\foreach \x in {0,1,2}
    \foreach \y in {0,1,2}
        \node (\x\y) at (\x,\y)[dot] {};
% \foreach \y in {0,1,2}
%     \node (3\y) at (3,\y) {};
\foreach \x in {0,1,2}
    \foreach \y[count=\yi] in {0,1}
        \draw (\x\y)--(\x\yi);
\foreach \x [count=\xi]in {0,1}
    \foreach \y in {0,1,2}
        \draw (\x\y)--(\xi\y);
% \foreach \x [count=\xi] in {0,2}
%         \draw (\x1)--(\xi1);
%       % \draw (\x\y)--(\xi\y);
% \foreach \y in {0,.5,1,1.5,2}
%     \foreach \x in {0,1,2}
%     \draw[-latex,line width=1.5pt] (\x,\y) -- (\x+.6,\y);
% \foreach \y [count=\xi] in {0,2}
%     \draw[-latex,line width=1.5pt] (1,\y) -- (1.6,\y);
% \foreach \y [count=\xi] in {0,1}
%     \draw[-latex,line width=1.5pt] (3,\y) -- (3,\y+.6);
\end{tikzpicture}

(a)
\end{minipage}
\begin{minipage}{.5\textwidth}
\centering 
\begin{tikzpicture}[dot/.style={draw,circle,fill,inner sep=1.5pt},line width=.7pt]
\fill[gray!80,fill opacity=.2](0,0) rectangle (2,2);
% \fill[pattern=north east lines](0,0) rectangle (3,2);
\foreach \x in {0,1,2}
    \foreach \y in {0,1,2}
        \node (\x\y) at (\x,\y)[dot] {};
% \foreach \y in {0,1,2}
%     \node (3\y) at (3,\y) {};
\foreach \x in {0,1,2}
    \foreach \y[count=\yi] in {0,1}
        \draw (\x\y)--(\x\yi);
\foreach \x [count=\xi]in {0,1}
    \foreach \y in {0,1,2}
        \draw (\x\y)--(\xi\y);
\foreach \y in {0,.5,1,1.5,2}
    \foreach \x in {0,1}
    \draw[-latex,line width=1.5pt] (\x,\y) -- (\x+.6,\y);
\foreach \y [count=\xi] in {0,2}
    \draw[-latex,line width=1.5pt] (1,\y) -- (1.6,\y);
\foreach \y [count=\xi] in {0,1}
    \draw[-latex,line width=1.5pt] (2,\y) -- (2,\y+.6);
\end{tikzpicture}

(b)
\end{minipage}
\caption{ (a) Cubical complex $\cC(2;2)$. (b) Acyclic partial matching $\Mate(\cdot,\balpha)$ applied to $\cC(2;2)$.}\label{fig:cubical:2dcubical}
\end{figure}

Any cube $\xi$ takes the form $\xi = I_1\times \ldots \times I_n$ where $I_i=[l_i,l_i]$ or $I_i = [l_i,l_i+1]$.   Let $\Z^d$ be the poset with order relation $(x_i)\leq (y_i)$ if and only if $x_i\leq y_i$ for all $i$. Define $f_d\colon \cC(m;d)\to \Z^d$ via
\[
f_d(\xi) = (-l_1,-l_2,\ldots,-l_d).
\]
It is elementary that $f_d$ is an order preserving map.   As indicated in Fig.~\ref{fig:cube:fiber}, a nonempty fiber $f_d^{-1}(p)$, 
where $p=(-l_1,-l_2,\ldots,-l_d)$, where $-l_i\neq -m$, is a set of cubes of the form
\[
[l_1,l_1+a_1]\times [l_2,l_2+a_2]\times \ldots \times [l_d,l_d+a_d] \quad \text{ where } a_i\in \{0,1\} .
\]
It is routine to check that the map $\iota \colon f^{-1}_d(p) \to \cH_d$ defined by 
\begin{equation}\label{eqn:cube:embed}
[l_1,l_1+a_1]\times [l_2,l_2+a_2]\times \ldots \times [l_d,l_d+a_d] \mapsto 
(a_1,a_2,\ldots, a_d)  \in \cH_d
\end{equation}
is an incidence embedding. If there is any $i$ such that $-l_i=-m$ then the corresponding elementary interval for any cell in the fiber is $I_i=[m,m]$, and there is an order-embedding into $\cH_{d-k}$, where $k$ is the total number of such intervals. Therefore the templates can be applied to each fiber via  \eqref{eqn:hypercube:alpha} and Theorem~\ref{thm:hypercube}, see Fig.~\ref{fig:cube:fiber}.   By Theorem~\ref{thm:global}, these  assemble into a global matching, $w\colon C(m;d)\to C(m;d)$, see Fig.~\ref{fig:cubical:2dcubical} (b).

\begin{figure}[h!]
\begin{minipage}[t]{0.48\textwidth}
\centering
\begin{tikzpicture}
 \tikzstyle{vertex}=[circle,inner sep=1pt]
  \tikzstyle{vert}=[circle,fill,inner sep=1.5pt]
\tikzstyle{edge} = [draw,thick,-,black]
 \tikzstyle{medge} = [draw,densely dashdotted,black]
\def\x{2.5}
\def\y{.75}
\def\z{(.5*\x+.5*\y}
\def\a{\x+0.05}
\def\b{\y+0.05}
 \coordinate[vert] (O) at (0,0,0) {};

\draw[-latex,line width=1.5pt] (0,0,0) -- (\y,0,0);
\draw[-latex,line width=1.5pt] (0,0,-\z) -- (\y,0,-\z);
\draw[-latex,line width=1.5pt] (0,\z,0) -- (\y,\z,0);
\draw[-latex,line width=1.5pt] (0,\z,-\z) -- (\y,\z,-\z);
 
 \coordinate (x1) at (\y,0,0) {};
 \coordinate (x2) at (\x,0,0) {};
  \draw[edge] (x1) -- (x2);

 \coordinate (y1) at (0,0,-\y) {};
 \coordinate (y2) at (0,0,-\x) {};
   \draw[edge] (y1) -- (y2);

 \coordinate (z1) at (0,\y,0) {};
 \coordinate (z2) at (0,\x,0) {};
 \draw[edge] (z1) -- (z2);
 
  \coordinate (bbb) at (\b,\b,-\b) {};
 \coordinate (bba) at (\b,\b,-\a) {};
 \coordinate (bab) at (\b,\a,-\b) {};
 \coordinate (baa) at (\b,\a,-\a) {};
 \coordinate (abb) at (\a,\b,-\b) {};
 \coordinate (aba) at (\a,\b,-\a) {};
 \coordinate (aab) at (\a,\a,-\b) {};
 \coordinate (aaa) at (\a,\a,-\a) {};
  \draw[medge] (bbb) -- (bba) -- (baa) -- (bab) -- cycle;
  \draw[medge] (bbb) -- (abb) -- (aba) -- (bba) -- cycle;
\draw[medge] (bbb) -- (abb) -- (aab) -- (bab) -- cycle;
\draw[medge] (abb) -- (aba) -- (aaa) -- (aab) -- cycle;
\draw[medge] (baa) -- (aaa);

   \fill [gray!10,fill opacity = .2] (bba)--(aba)--(aaa)--(baa)--cycle;
 \fill [gray!20,fill opacity = .2] (bbb) -- (bba) -- (baa) -- (bab) -- cycle;
  \fill [gray!20,fill opacity = .2] (bbb) -- (abb) -- (aba) -- (bba) -- cycle;
  \fill [gray!20,fill opacity = .2] (bbb) -- (abb) -- (aab) -- (bab) -- cycle;
 \fill [gray!20,fill opacity = .2] (abb) -- (aba) -- (aaa) -- (aab) -- cycle;
  \fill [gray!20,fill opacity = .2] (bba)--(aba)--(aaa)--(baa)--cycle;

 \coordinate (xz1) at (\y,\y,0) {};
 \coordinate (xz2) at (\x,\y,0) {};
 \coordinate (xz3) at (\x,\x,0) {};
 \coordinate (xz4) at (\y,\x,0) {};
 \draw[medge] (xz1) -- (xz2) -- (xz3) -- (xz4) -- cycle;
 \fill [gray!50,fill opacity = .5](xz1) -- (xz2) -- (xz3) -- (xz4) -- cycle;

 \coordinate (xy1) at (\y,0,-\y) {};
 \coordinate (xy2) at (\x,0,-\y) {};
 \coordinate (xy3) at (\x,0,-\x) {};
 \coordinate (xy4) at (\y,0,-\x) {};
 \draw[medge] (xy1) -- (xy2) -- (xy3) -- (xy4) -- cycle;
  \fill [gray!50,fill opacity = .5] (xy1) -- (xy2) -- (xy3) -- (xy4) -- cycle;

 \coordinate (yz1) at (0,\y,-\y) {};
 \coordinate (yz2) at (0,\y,-\x) {};
 \coordinate (yz3) at (0,\x,-\x) {};
 \coordinate (yz4) at (0,\x,-\y) {};
 \draw[medge] (yz1) -- (yz2) -- (yz3) -- (yz4) -- cycle;
  \fill [gray!50,fill opacity = .5] (yz1) -- (yz2) -- (yz3) -- (yz4) -- cycle;

\end{tikzpicture}

(a)
\end{minipage}
\hspace{\fill}
\begin{minipage}[t]{0.48\textwidth}
\centering
\begin{tikzpicture}
 \tikzstyle{vertex}=[circle,minimum size=20pt,inner sep=0pt]
 \tikzstyle{edge} = [draw,thick,-,black]
  \tikzstyle{medge} = [draw,dotted,black]
\def\x{1.5}
\def\y{1}
%x,%y,%depth
\node[vertex] (0) at (0,0) {$000$};

\node[vertex] (100) at (\x,\y) {$100$};
\node[vertex] (001) at (0,\y) {$001$};
\node[vertex] (010) at (-\x,\y) {$010$};

\node[vertex] (110) at (0,2*\y) {$110$};
 \node[vertex] (011) at (-\x,2*\y) {$011$};
 \node[vertex] (101) at (\x,2*\y) {$101$};

 \node[vertex] (111) at (0,3*\y) {$111$};
 %\draw[-latex,line width=1pt] (1) -- (10);
 \draw[edge] (0) -- (001);
 \draw[edge] (0)--(010);
 \draw[edge] (001)--(011);
  \draw[edge] (010)--(011);
 \draw[edge] (100)--(101);
  \draw[edge] (100)--(110);
 \draw[edge] (110)--(111);
  \draw[edge] (101)--(111);
 %Matching
 \draw[-latex,line width=.5pt] (0)--(100);
 \draw[-latex,,line width=.5pt] (001)--(101);
 \draw[-latex,,line width=.5pt] (011)--(111);
\draw[-latex,,line width=.5pt] (010)--(110);
\end{tikzpicture}

(b)
\end{minipage}

\caption{(a) Example of fiber $f_3^{-1}(p)$ and template $\alpha_1$ in cubical complex. (b) Hasse diagram of the face poset of $\cH_3$ with template $\cM_1$.}\label{fig:cube:fiber}
\end{figure}

% For the moment, consider the case that $\cX$ is the cubical complex in $\R^d$ consisting of every elementary cube, and recall the notion of interval from Definition~\ref{defn:interval}.    In this case, a fiber of $f\colon(\cX,\leq)\to \Z$ is a disjoint union $\bigsqcup H_j$ of intervals $H_j\subset \cX$.   By an appropriate translation to the origin, each interval $H_j$ is of the form
% \[
% H_j = [0,a_1]\times [0,a_2]\times \ldots \times [0,a_n] \quad\quad \text{where } a_i \in \{0,1\}.
% \]
% Setting $a := (a_1,\ldots,a_n)\in \{0,1\}^n$, it may be seen that each $H_j$ is in fact a hypercube in the face poset $\cX$. 

\begin{rem}
Let $\cX\subset \cC(m;d)$ in $\R^d$ be an arbitrary cubical complex and consider $f_d|_\cX\colon \cX\to \Z^d$.  
A fiber of $f_d|_\cX$ is embeddable into $\cH_d$ via a restriction of \eqref{eqn:cube:embed}.  
From Theorem~\ref{thm:global} it follows that the templates can be used to assemble a global matching $w\colon \cX\to \cX$.
\end{rem}

\subsection{Templates for Simplicial Complexes}

We outline how the templates can be used for simplicial complexes.  In fact, they are related to well-known acyclic partial matchings on intervals in the face poset of a simplicial complex~\cite{bauer2017morse,freij2009equivariant}, which~\cite{bauer2017morse} call \emph{vertex refinements}. See Fig.~\ref{fig:simplicial:hypercube}. 

Let $\cK^n$ be an $n$-simplex with vertex set $V=\{v_1,\ldots, v_{n+1}\}$. A simplex $\xi\in \cK^n$ is a subset of $V$.  There is an order embedding $\iota_n\colon \cK^n\to \cH_{n+1}$ given by 
\[
\xi\mapsto 
\bx=(x_i) \text{ where } \begin{cases}
  x_i = 1 & \text{if } v_i\in \xi,\\
  x_i = 0 & \text{otherwise.}
\end{cases}
\]

\begin{figure}[h!]
\centering
\begin{minipage}{.45\textwidth}
\centering
\begin{tikzpicture}
  \tikzstyle{vertex}=[circle,inner sep=1pt]
  \tikzstyle{vert}=[circle,fill,inner sep=1.5pt]
  \def\x{0}
    \def\dx{1}
  \def\y{1.35}
\tikzstyle{edge} = [draw,-,black]
 \tikzstyle{medge} = [draw,densely dotted,black]
 \coordinate[vert,label=left:$v_1$] (v0) at (0,0) {};
\coordinate[vert,label=above:$v_3$] (v2) at (0,\y) {};
\coordinate[vert,label=right:$v_2$] (v1) at (1,1) {};
\node (0) at (0,0) {};
%\node (1) at (-1,1) {};
\node (2) at (0,\y) {};
\node (3) at (1,1) {};
\node (02) at (0, .5) {};
\node (03) at (.5, .5) {};

\node (11) at (.5, 1.3) {};
\node (22) at (.20, .5) {};

% \draw[edge] (0)--(1) node (01) [midway] {};
 \draw[edge] (0)--(2) node (02) [midway] {};
 \draw[edge] (0)--(3) node (03) [midway] {};
%\draw[edge] (1)--(2) node (12) [midway] {};
\draw[edge] (2)--(3) node (23) [midway] {};
%\fill [gray!80,fill opacity=0.2] (0,0)--(-1,1)--(0,\y)--cycle;
\fill [gray!80,fill opacity=0.2] (0,0)--(1,1)--(0,\y)--cycle;

\draw[-latex,line width=1.5pt] (2) -- (02);
\draw[-latex,line width=1.5pt] (3) -- (03);
\draw[-latex,line width=1.5pt] (11) -- (22);

\end{tikzpicture}

\end{minipage}
\begin{minipage}{.5\textwidth}
    \centering
    \begin{tikzpicture}
 \tikzstyle{vertex}=[circle,minimum size=20pt,inner sep=0pt]
 \tikzstyle{edge} = [draw,thick,-,black]
  \tikzstyle{medge} = [draw,dotted,black]
\def\x{1.85}
\def\y{1.1}
%x,%y,%depth
\node (0) at (0,0) {};

\node (100) at (\x,\y) {$\{v_1\}$};
\node (001) at (0,\y) {$\{v_3\}$};
\node (010) at (-\x,\y) {$\{v_2\}$};

\node(110) at (0,2*\y) {$\{v_1,v_2\}$};
 \node (011) at (-\x,2*\y) {$\{v_2,v_3\}$};
 \node (101) at (\x,2*\y) {$\{v_1,v_3\}$};
 
 \node (111) at (0,3*\y) {$\{v_1,v_2,v_3\}$};
 
% \draw[edge] (0) -- (001);
% \draw[edge] (0)--(010);
 \draw[edge] (001)--(011);
  \draw[edge] (010)--(011);
 \draw[edge] (100)--(101);
  \draw[edge] (100)--(110);
 \draw[edge] (110)--(111);
  \draw[edge] (101)--(111);
 %Matching
% \draw[-latex,line width=.5pt] (0)--(100);
 \draw[-latex,,line width=.5pt] (001)--(101);
 \draw[-latex,,line width=.5pt] (011)--(111);
\draw[-latex,,line width=.5pt] (010)--(110);
\end{tikzpicture}

%(a)
\end{minipage}

(a) \hspace{55mm} (b)
\caption{(a) The 2-simplex $\cK^2$ and matching  $\alpha_1$, called the vertex refinement (along $v_1$). (b) The Hasse diagram of the face poset of $\cK^2$ and matching $\alpha_1$, cf. Fig.~\ref{fig:cube:fiber}.}\label{fig:simplicial:hypercube}
\end{figure}

 More generally, let $\cK$ be a simplicial complex.  Recall that given $\xi,\xi'\in \cK$ with $\xi' \leq \xi$  the {\em interval} from $\xi'$ to $\xi$, denoted $[\xi',\xi]$, is the set $\{\xi''\in \cK\mid  \xi' \leq \xi'' \leq \xi\}$. The order embedding $\iota_n$ restricts to a order embedding $\iota_n|_{[\xi',\xi]}\colon [\xi',\xi] \to \cH_{n+1}$, where $n=\dim (\xi)$. Again, through \eqref{eqn:hypercube:alpha} and Theorem~\ref{thm:hypercube} the hypercube templates can be pulled back to $[\xi',\xi]$ as in Fig.~\ref{fig:simplicial:hypercube}.
The strategy taken in~\cite{bauer2017morse,freij2009equivariant} to find an acyclic partial matching on $\cK$ is to find a {\em generalized discrete Morse function}: an order-preserving map $f\colon \cK\to \R$ where each fiber is an  interval, and on each non-singleton interval use a vertex refinement along some $v\in \xi\setminus \xi'$; cf. Theorem~\ref{thm:global}.

%In general, given an order-preserving map $f\colon (\cK,\leq)\to \Z$ the fibers of $f$ are convex sets; a \emph{generalized discrete Morse function} is an order-preserving map $f\colon (\cK,\leq)\to \Z$ such that the fibers of $f$ are intervals. 

\section{Computational Experiments}\label{sec:experiments}

We provide two sets of computational experiments for demonstrating the efficacy of the template scheme, as well as for  benchmarking the performance of our implementation of $\Mate$/$\GMate$ within our software package \textsc{PyCHomP}~\cite{cmcode}.\footnote{Jupyter notebooks for all computational experiments found in this section are available at \url{https://github.com/kellyspendlove/pyCHomP/tree/cubical/doc/Templates-Paper}.}  Our first set of examples are cubical complexes which are homotopy equivalent to $d$-spheres.    Our second set of experiments are drawn from an application of computational connection matrix theory~\cite{hms} to the Conley-Morse theory on spaces of braid diagrams developed in~\cite{im}.   

We track timing information, which comes from iPython's \%timeit `magic', and displays an average and standard deviation of seven runs for cell complexes with less than $10^9$ cells, and two runs for cell complexes with more.  All experiments were performed with 1 core on a machine with 72 Intel(R) Xeon(R) Gold 6240M  2.60GHz CPUs and 3TB of RAM. %This is part of Rutgers University's Perceval cluster.

\subsection{The Homology of Cubical $d$-Spheres}\label{sec:experiments:hypersphere}

We give two different examples of cubical complexes homotopy equivalent to $d$-spheres.   As suggested by Fig.~\ref{fig:S} define
\[
\cS^d = \{\xi\in  \cC(1;d+1)\mid \dim \xi < d+1 \}
\quad \text{and} \quad \cS_{top}^d = \{\xi\in  \cC(3;d+1)\mid \xi \neq [1,2]^d \}.
\]

\begin{figure}[h!]
\begin{minipage}{.5\textwidth}
\centering
\begin{tikzpicture}[dot/.style={draw,circle,fill,inner sep=1.5pt},line width=.7pt]
%\fill[gray!80,fill opacity=.2](0,0) rectangle (2,2);
\foreach \x in {0,1}
    \foreach \y in {0,1}
        \node (\x\y) at (\x,\y)[dot] {};
\foreach \x in {0,1}
    \foreach \y[count=\yi] in {0}
        \draw (\x\y)--(\x\yi);
\foreach \x [count=\xi]in {0}
    \foreach \y in {0,1}
        \draw (\x\y)--(\xi\y);
\end{tikzpicture}
\end{minipage}
\begin{minipage}{.5\textwidth}
\centering
\begin{tikzpicture}[dot/.style={draw,circle,fill,inner sep=1pt},line width=.7pt,scale=.75]
\fill[gray!80,fill opacity=.2](0,0) rectangle (3,3);
\fill[white](1,1) rectangle (2,2);
\foreach \x in {0,1,2,3}
    \foreach \y in {0,1,2,3}
        \node (\x\y) at (\x,\y)[dot] {};

\foreach \x in {0,1,2,3}
    \foreach \y[count=\yi] in {0,1,2}
        \draw (\x\y)--(\x\yi);
\foreach \x [count=\xi]in {0,1,2}
    \foreach \y in {0,1,2,3}
        \draw (\x\y)--(\xi\y);

\end{tikzpicture}
\end{minipage}

\centering
(a) \hspace{55mm} (b)

\caption{ Cubical complexes (a) $\cS^1\subset \R^2$ and (b) $\cS_{top}^1\subset \R^{2}$.}
\label{fig:S}
\end{figure}

In order to compute the homology of these cubical complexes, we utilize the algorithm $\HOM$~\cite[Algorithm 6.2]{hms}, which is based on iterated discrete Morse theory, see~\cite{hms,mn}.  Iterated discrete Morse theory performs rounds of discrete Morse theory until the boundary operator is zero, i.e., $\partial_\bullet^{C(w)}=0.$
In this case, the $n$-th Betti number of $\cX$ is $\dim C_n(w)$, the dimension of the vector space $C_n(w)$. For $\HOM$, the use of the templates and $\Mate$ applies only during the first round of discrete Morse theory as the subsequent complexes are not necessarily cubical or $\cH_n$-embeddable; our implementation uses~\cite[Algorithm 3.6]{focm} for the latter rounds of discrete Morse theory, as it applies to generic cell complexes.   For more detail, see the discussion around~\cite[Algorithm 6.2]{hms}.  However, note that in this particular case, when applying $\HOM$ to $\cS^d$ and $\cS_{top}^d$, only one round of Morse theory is needed, as $\Mate(\cdot,\balpha=(\alpha_i))$, with $\alpha_i$ defined as in Section~\ref{sec:general:cubical}  will produce an optimal matching, and thus in this case a chain complex obeying $\partial_\bullet^{C(w)}=0$.  Table~\ref{tab:hypsphere} gives the results of the experiments and Fig.~\ref{plot:intro} plots the results of Table~\ref{tab:hypsphere}.

From the tables it can be seen that the algorithm $\HOM$ scales roughly linearly with $N$, the number of cells.  %However, the dimensionality $d$ of the cell complex plays an important role.
% Observe that $\cS_{top}^{12}$ has $1.3\times 10^{10}$ cells and completes faster than $\cS^{20}$, which has $3.4\times 10^9$ cells. 
% This is not too surprising for two reasons.
% First, in general the complexity of executing  $\Mate(\cdot,\balpha=(\alpha_1,\ldots,\alpha_d))$ on a cell is $O(2^d)$ as in Theorem~\ref{thm:algmatching}, and second, there are many more flowlines to follow in higher dimensions when computing $\partial_\bullet^{C(w)}$.

\begin{table}[h!]
\centering
\begin{subtable}[t]{0.48\textwidth}
\setlength{\tabcolsep}{5pt}
\centering
\begin{tabular}[t]{l l l} % @{} serves to suppress white space at ends of table
\toprule
   \multicolumn{3}{c @{}}{Experimental Results for $\cS^d$ }\\ 
\cmidrule(l){1-3}
  {$d$}  & {$\#\cS^d$ } &  { Time Elapsed }  \\
\midrule
5 & 242 &  $160\mu s \pm 1.65 \mu s$   \\
6  & 728 &   $384 \mu s \pm 823 n s$  \\
7  & 2186 &  $998 \mu s \pm 17 \mu s$  \\
8 & 6560 &   $2.84 m s \pm 31.3 \mu s$    \\
9 & 19682 & $8.07 ms \pm 139 \mu s$  \\
10 & 59048 &   $24.5 m s \pm 102 \mu s$  \\
11  & 177146 &  $74.1 m s \pm 186 \mu s $   \\
12  & 531440 &  $223 ms \pm 1.96 ms$  \\
13 & 1594322 &    $647 ms \pm 12 ms$   \\
14 & 4782968 &  $2.05 s \pm 13.2 ms$ \\
15 & 14348906 &  $6.27 s \pm 103 ms$   \\
16  & 43046720 &   $18.7 s \pm 148 ms$  \\
17  & 129140162 & $53.7 s \pm 21.2 ms$  \\
18 & 387420488 &  $2m42s \pm 64.4 ms$\\
19 & 1162261466 &  $8m54s\pm 1.96s$ \\
20 & 3486784400 &  $27m 12s \pm 6.24 s$ \\
\bottomrule
\end{tabular}
%\caption{}
\label{tab:hypersphere}
\end{subtable}
\hspace{\fill}
\begin{subtable}[t]{.48\textwidth}
\flushright
\setlength{\tabcolsep}{5pt}
\centering
\begin{tabular}[t]{l l l} % @{} serves to suppress white space at ends of table
\toprule
   \multicolumn{3}{c @{}}{Experimental Results for $\cS_{top}^d$ }\\ 
\cmidrule(l){1-3}
  {$d$}  & {$\#\cS^d_{top}$ } &  { Time Elapsed }  \\
\midrule
3 & 342 & $325 \mu s \pm 929 ns$\\
4 & 2400 & $1.98 ms \pm 1.63 \mu s$  \\
5 & 16806 &   $13.6 ms \pm 128 \mu s$  \\
6 & 117648 &  $92.3 ms \pm 87.7 \mu s$   \\
7 & 823542 &  $656 ms \pm 934 \mu s$  \\
8 & 5764800 & $4.72 s \pm 1.23 ms$ \\
9 & 40353606 & $34.3 s \pm 27.6 ms$  \\
10 & 282475248  & $4m14s\pm 841 ms$    \\
11 & 1977326742 & $31m18s \pm 7.12 s$\\
12 & 13841287200 & $3h55m 41s\pm 23.8 s$\\
\bottomrule
\end{tabular}
\end{subtable}
(a)\hspace{60mm} (b)
\caption{Results for $\HOM$ on (a)  $\cS^d$ and (b)  $\cS_{top}^d$.}\label{tab:hypsphere}
\end{table}

\begin{rem}
We note that the templates scheme and the algorithm $\HOM$ implemented in the package \textsc{PyCHomP} have also been used to compute the homology of high dimensional complexes in~\cite{config}.
\end{rem}

% \begin{figure}
% \centering
% \begin{tikzpicture}[scale=.65]
% \begin{axis}[
%     xlabel= dimension,
%     ylabel= time(s),
%     ymode=log,
%     %log ticks with fixed point,
%     % for log axes, x filter operates on LOGS.
%     % and log(x * 1000) = log(x) + log(1000):
%     % x filter/.code=\pgfmathparse{#1 + 6.90775527898214},
% ]
% \addplot[
%     smooth,
%     mark=*,
%     thick,
%     black,
% ] file[skip first] {hypercubeDTopData.dat};
% % \addplot[
% %     domain = 10:10000000000,
% %     samples = 200,
% %     smooth,
% %     thick,
% %     gray,
% % ] {x/100000};
% \end{axis}
% \end{tikzpicture}
% \caption{Time(s) vs. $d$ $\cS_{top}^d$; log scale.}\label{plot:hyperspheretop}
% \end{figure}

\subsection{A Conley-Morse Theory on the Space of Braid Diagrams}\label{sec:braids}

Our second set of experiments is drawn from an application of computational connection matrix theory~\cite{hms} to a Conley-Morse theory on spaces of braid diagrams~\cite{im}.  %References for the Morse theory on braids are~\cite{day,bgvw,gv,im,miro}.   
See~\cite{braids} for a more detailed discussion of similar examples and their Conley complexes/connection matrices in the context of dynamics and scalar parabolic PDEs which are produced by~\cite{cmcode}. %In particular, nonzero $f$-polynomials in the Conley-Morse graph imply the existence of nontrivial invariant sets in the dynamics, and nonzero entries in the connection matrix can be used to prove the existence of connecting orbits.

The Conley-Morse theory on braid diagrams provides a nice set of scalable and relevant graded cubical complexes which we will use for benchmarking the template scheme and $\GMate$ algorithm.  In particular, we compute the Conley complexes/connection matrices for these graded cubical complexes using the algorithm~\cite[Algorithm 6.8]{hms}, $\CM$.  As we are primarily interested in the use of templates, we  provide only a brief description of the theory and setup.  See~\cite{hms} for a detailed discussion of the algorithm and the connection matrix theory, and~\cite{braids} for interpretation of these results in terms of dynamics.

%We will also use this section to show the Conley-Morse graphs (see Section~\ref{sec:intro:conley}) that one obtains using the code~\cite{cmcode}, which are visualized with the Graphviz software~\cite{graphviz}.  The connection matrix is not displayed in the Conley-Morse graph, but it is produced as part of the computation and is easily queried from the code available at~\cite{cmcode}.

\begin{defn}\label{defn:discbraid}
{\em
The space of {\em period $d$ discrete braid diagrams on $m$ strands}, denoted $\Conf_d^m$, is the space of all pairs $(\uu,\tau)$ where $\tau\in S_m$ is a permutation on $m$ elements and $\uu=\{u^\alpha\}_{\alpha=1}^m$ is an unordered collection of vectors -- called {\em strands} -- each individual strand $u^\alpha = (u_i^\alpha)_{i=1}^{d+1}\in \R^{d+1}$, satisfying the following conditions:
\begin{enumerate}
    \item {\bf Periodicity:} $u_{d+1}^\alpha = u_1^{\tau(\alpha)}$ for all $\alpha$.
    \item {\bf Transversality:} for any $\alpha\neq \alpha'$ such that $u_i^\alpha = u_i^{\alpha'}$ for some $i$,
    \begin{equation}
        (u_{i-1}^\alpha-u_{i-1}^{\alpha'})(u_{i+1}^\alpha - u_{i+1}^{\alpha'}) < 0.
    \end{equation}
\end{enumerate}
}
\end{defn}

\begin{figure}[h!]
\begin{minipage}{.5\textwidth}
\centering
\begin{tikzpicture}[dot/.style={draw,circle,fill,inner sep=.75pt},line width=.5pt,scale=.5]
\foreach \x in {0,1,2}
    \foreach \y in {0,.75,1.5,2.5,3.25,4}
        \node (\x\y) at (\x, \y)[dot] {};
% \node (f1) at (0,2.15)[dot] {};
% \node (f2) at (1,2)[dot] {};
% \node (f3) at (2,2.15)[dot] {};
% \draw[densely dashed] (f1) -- (f2) -- (f3);
%%constrant strands
\foreach \y in {0,4}
    \foreach \x[count=\xi]  in {0,1}
        \draw (\x\y) to (\xi\y);
\foreach \x[count=\xi] in {0} {
     \draw (\x, .75) to (\xi, 2.5);
     \draw (\x, 1.5) to (\xi, .75);
     \draw (\x, 2.5) to (\xi, 3.25);
     \draw (\x, 3.25) to (\xi, 1.5); }
\foreach \x in {1} {
     \draw (\x, .75) to (\x+1, 1.5);
     \draw (\x, 1.5) to (\x+1, 3.25);
     \draw (\x, 2.5) to (\x+1, .75);
     \draw (\x, 3.25) to (\x+1, 2.5); }
\end{tikzpicture}
\end{minipage}
\begin{minipage}{.5\textwidth}
\centering
\begin{tikzpicture}[dot/.style={draw,circle,fill,inner sep=.75pt},line width=.5pt,scale=.5]
\foreach \x in {0,1,2}
    \foreach \y in {0,.75,1.5,2.5,3.25,4}
        \node (\x\y) at (\x, \y)[dot] {};
\node (f1) at (0,2.15)[dot] {};
\node (f2) at (1,1.05)[dot] {};
\node (f3) at (2,2.15)[dot] {};
\draw[dashed] (f1) -- (f2) -- (f3);
\node (f0) at (-.5,2.15) {$\uu$};
%%constrant strands
\foreach \y in {0,4}
    \foreach \x[count=\xi]  in {0,1}
        \draw (\x\y) to (\xi\y);
\foreach \x[count=\xi] in {0} {
     \draw (\x, .75) to (\xi, 2.5);
     \draw (\x, 1.5) to (\xi, .75);
     \draw (\x, 2.5) to (\xi, 3.25);
     \draw (\x, 3.25) to (\xi, 1.5); }
\foreach \x in {1} {
     \draw (\x, .75) to (\x+1, 1.5);
     \draw (\x, 1.5) to (\x+1, 3.25);
     \draw (\x, 2.5) to (\x+1, .75);
     \draw (\x, 3.25) to (\x+1, 2.5); }
\end{tikzpicture}
\end{minipage}
\caption{(a) Braid diagram $\vv\in \Conf_2^6$. (b) Free strand $\uu$ is dashed, $\uu\sqcup \vv\in \Conf_2^7$.}\label{fig:braid}

\end{figure}

Since the strands of $\uu$ are unordered, all pairs $(\uu,\tau)$ and $(\uu,\tau')$ satisfying $\tau'=\sigma\tau\sigma^{-1}$ for some $\sigma\in S_m$ are identified.  Henceforth, we suppress  the permutation $\tau$ from the notation.  We will also refer to $\uu$ simply as a \emph{braid diagram}.

For the purposes of this paper, a \emph{relative (discrete) braid diagram}, denoted $\uu\rel \vv$, consists of a single strand $\uu\in \Conf_d^1$ -- called the \emph{free strand} -- and  strands $\vv \in \Conf_d^m$ -- called the {\em skeleton} -- such that the disjoint union $\uu\sqcup \vv$
is a braid diagram in $\Conf_d^{m+1}$ according to the above definition. Following~\cite{im}, for fixed $\vv\in \Conf_d^m$, we define 
\[
\Conf_d^1\rel \vv = \{\uu\in \Conf_d^1 : \uu\sqcup \vv\in \Conf_d^{m+1}\} \subset \R^d.
\]
The path components of $\Conf_d^1\rel \vv$ are called {\em relative (discrete) braid classes}, and are denoted by $[\uu\rel\vv]$.

Given a skeleton $\vv\in \Conf_d^m$, the two parameters -- the dimension $d$ and the number of strands $m$ --  suffice to determine the cubical complex $\cX\subseteq \R^d$~\cite{im}.  We make two assumptions on our skeletons $\vv$ in order to construct $\cX$ as the complex $\cC=\cC(m-1;d)$.
\begin{enumerate}
    \item First, and without loss of generality (up to scaling), we assume the following condition on our skeleton $\vv$: for any fixed $i$ with $1\leq i \leq d+1$ the cross-section  $(v_i^1,v_i^2,\ldots,v_i^m)$ of the strands is a permutation of $(0,1,\ldots,m-1)$.  That is, the $(v_i^\alpha)$ are integers and take unique values between 0 and $m-1$.  Our condition on $\vv$ implies that given a tuple $(i,v_i^\alpha)$ we have that $(i,v_i^\alpha)\in \Z\times \Z$, and in particular $(i,v_i^\alpha)$ lies on the integer lattice within the rectangle $[0,d]\times [0,m-1]$.  
    \item Second, we assume that our skeleton $\vv\in \Conf_d^m$ contains constant strands $v^\alpha = (0,0,\ldots,0)$ and $v^{\alpha'} = (m-1,m-1,\ldots,m-1)$; see Fig.~\ref{fig:braid}.  This enforces boundary conditions, see~\cite{im,braids}. 
\end{enumerate}

The relative braid classes $[\uu\rel \vv]$ are precisely the interiors of the top dimensional cubes -- the $d$-dimensional cubes $\cC_d$ -- of $\cC$.  A free strand $\uu$ and skeleton  $\vv$ have an associated crossing number, denoted $\cross(\uu\rel\vv)$, which is the number of intersections of the strand $\uu$ with the strands in $\vv$.  For $\uu'\in [\uu\rel\vv]$ we have $\cross(\uu'\rel\vv)=\cross(\uu\rel\vv)$, see~\cite{im}. Therefore the crossing number is constant on $d$-cubes $\cC_d$.  This furnishes a function
\[
\cross_\vv \colon \cC_d \to \N.
\]
The function $\cross_\vv$ is a discrete Lyapunov function for any parabolic flow which fixes $\vv$ as a set of equilibrium solutions; see~\cite{im} for more details.   In Fig.~\ref{fig:lap}(a) is the cubical complex $\cC=\cC(5;2)$ associated to the skeleton $\vv\in \Conf_2^6$ in Fig.~\ref{fig:braid}(a); the integer labeling on the $2$-cubes $\cC_2$ gives the value of $\cross_\vv(\cdot)$ on each $2$-cube.  

\begin{figure}[h!]
\begin{minipage}{.45\textwidth}
\centering
\begin{tikzpicture}[dot/.style={draw,circle,fill,inner sep=.75pt}, scale=.55]
\draw[step=1cm,gray,very thin] (0,0) grid (5,5);
\foreach \x in {.5} {
    \foreach \y [evaluate = \y as \z using int(2*(\y-.5))]  in {.5, 1.5, 2.5, 3.5, 4.5} {
        \node (\x\y) at (\x, \y) {\z}; }
    \foreach \y [evaluate = \y as \z using int((8-2*(\y-.5)))]  in {.5, 1.5, 2.5, 3.5, 4.5} {
        \node (\x\y) at (\x+4, \y) {\z}; }
}
\foreach \y in {.5} {
     \foreach \x [evaluate = \x as \z using int(2*(\x-.5))]  in {1.5, 2.5, 3.5} {
        \node (\x\y) at (\x, \y) {\z};  }
    \foreach \x [evaluate = \x as \z using int(8-2*(\x-.5))]  in {1.5, 2.5, 3.5} {
        \node (\x\y) at (\x, \y+4) {\z}; }
}

\node (0) at (1.5,1.5) {$4$}; 
\node (1) at (2.5,1.5) {$2$}; 
\node (2) at (3.5,1.5) {$4$}; 

\node (3) at (1.5,2.5) {$6$}; 
\node (4) at (2.5,2.5) {$4$}; 
\node (5) at (3.5,2.5) {$6$}; 

\node (6) at (1.5,3.5) {$4$}; 
\node (7) at (2.5,3.5) {$2$}; 
\node (8) at (3.5,3.5) {$4$}; 
\draw[thick, -stealth] (0,0) -- (5.1,0);
\draw[thick,-stealth] (0,0) -- (0,5.1);

% \node (d1) at (2,2)[dot] {};
% \node (d2) at (3,3)[dot] {};
% \node (d3) at (4,1)[dot] {};
% \node (d4) at (1,4)[dot] {};

\node (d0) at (0,0)[dot] {};
\node (d1) at (1,3)[dot] {};
\node (d2) at (2,1)[dot] {};
\node (d3) at (3,4)[dot] {};
\node (d4) at (4,2)[dot] {};
\node (d5) at (5,5)[dot] {};

\node (u) at (2.75,1.3)[dot] {};
\node (8) at (2.85,1.05) {\scriptsize $\uu$};

\end{tikzpicture}

(a)
\end{minipage}
\begin{minipage}{.45\textwidth}
\centering
\begin{tikzpicture}[dot/.style={draw,circle,fill,inner sep=.75pt}, scale=.55]
\draw[step=1cm,gray,very thin] (0,0) grid (5,5);
\foreach \x in {.5} {
    \foreach \y [evaluate = \y as \z using int(2*(\y-.5))]  in {.5, 1.5, 2.5, 3.5, 4.5} {
        \node (\x\y) at (\x, \y) {\z}; }
    \foreach \y [evaluate = \y as \z using int((8-2*(\y-.5)))]  in {.5, 1.5, 2.5, 3.5, 4.5} {
        \node (\x\y) at (\x+4, \y) {\z}; }
}
\foreach \y in {.5} {
     \foreach \x [evaluate = \x as \z using int(2*(\x-.5))]  in {1.5, 2.5, 3.5} {
        \node (\x\y) at (\x, \y) {\z};  }
    \foreach \x [evaluate = \x as \z using int(8-2*(\x-.5))]  in {1.5, 2.5, 3.5} {
        \node (\x\y) at (\x, \y+4) {\z}; }
}
\foreach \x in {.5} 
    \foreach \y[count=\yi]  in {4.25, 3.25, 2.25, 1.25} 
        \draw[-latex,line width=.5pt] (\x,\y) to (\x,\y-.55);
\foreach \x in {1.5} 
    \foreach \y[count=\yi]  in {4.25, 3.25, 2.25, 1.25} 
        \draw[-latex,line width=.5pt] (\x,\y) to (\x,\y-.55); 
\foreach \x in {4.5} 
    \foreach \y[count=\yi]  in {.75, 1.75, 2.75, 3.75} 
        \draw[-latex,line width=.5pt] (\x,\y) to (\x,\y+.55);
\foreach \x in {3.5} 
    \foreach \y[count=\yi]  in {.75, 1.75, 2.75, 3.75} 
        \draw[-latex,line width=.5pt] (\x,\y) to (\x,\y+.55);
\foreach \x in {2.5} {
    \foreach \y in {1.5, 2.5, 4.5} {
        \draw[-latex,line width=.5pt] (\x,\y-.25) to (\x,\y-.8);
    }
      \foreach \y in {.5, 2.5, 3.5} {
        \draw[-latex,line width=.5pt] (\x,\y+.25) to (\x,\y+.8);
    }
}    
\foreach \y in {4.5} 
    \foreach \x[count=\yi]  in {.75, 1.75, 2.75, 3.75} 
    \draw[-latex,line width=.5pt] (\x,\y) to (\x+.55,\y);
\foreach \y in {3.5} 
    \foreach \x[count=\yi]  in {.75, 1.75, 2.75, 3.75} 
    \draw[-latex,line width=.5pt] (\x,\y) to (\x+.55,\y);
\foreach \y in {.5} 
    \foreach \x[count=\yi]  in {4.25, 3.25, 2.25, 1.25} 
    \draw[-latex,line width=.5pt] (\x,\y) to (\x-.55,\y);
\foreach \y in {1.5} 
    \foreach \x[count=\yi]  in {4.25, 3.25, 2.25, 1.25} 
    \draw[-latex,line width=.5pt] (\x,\y) to (\x-.55,\y);
    
\foreach \y in {2.5} {
    \foreach \x  in {.5, 1.5, 3.5} {
        \draw[-latex,line width=.5pt] (\x+.25,\y) to (\x+.8,\y);
        }
    \foreach \x  in {1.5, 3.5,4.5} {
        \draw[-latex,line width=.5pt] (\x-.25,\y) to (\x-.8,\y);
    }
}
\draw[-latex,line width=.5pt] (.5,2.5+.25) to (.5,2.5+.8);
\draw[-latex,line width=.5pt] (1.5,2.5+.25) to (1.5,2.5+.8);
\draw[-latex,line width=.5pt] (1.5,0.5+.25) to (1.5,0.5+.8);
\draw[-latex,line width=.5pt] (3.5,2.5-.25) to (3.5,2.5-.8);
\draw[-latex,line width=.5pt] (3.5,4.5-.25) to (3.5,4.5-.8);
\draw[-latex,line width=.5pt] (4.5,2.5-.25) to (4.5,2.5-.8);

\draw[-latex,line width=.5pt] (1.5+.25,0.5) to (1.5+.8,0.5);
\draw[-latex,line width=.5pt] (1.5+.25,1.5) to (1.5+.8,1.5);
\draw[-latex,line width=.5pt] (3.5+.25,1.5) to (3.5+.8,1.5);

\draw[-latex,line width=.5pt] (3.5-.25,4.5) to (3.5-.8,4.5);
\draw[-latex,line width=.5pt] (3.5-.25,3.5) to (3.5-.8,3.5);
\draw[-latex,line width=.5pt] (1.5-.25,3.5) to (1.5-.8,3.5);

% \foreach \x in {.5, 1.5}
%     \foreach \y in {3.5}
%         \draw[-latex,line width=.5pt] (\x,\y+.25) to (\x,\y+.8);
% \foreach \y in {3.5, 4.5}
%     \foreach \x in {1.5}
%         \draw[-latex,line width=.5pt] (\x-.25,\y) to (\x-.8,\y);
% \foreach \x in {3.5, 4.5}
%     \foreach \y in {1.5}
%         \draw[-latex,line width=.5pt] (\x,\y-0.25) to (\x,\y-.8);
% \foreach \y in {0.5, 1.5}
%     \foreach \x in {3.5}
%         \draw[-latex,line width=.5pt] (\x+.25,\y) to (\x+.8,\y);
        
%  \draw[-latex,line width=.5pt] (1.5,1.75) to (1.5,1.75+.55);
%  \draw[-latex,line width=.5pt] (1.75,1.5) to (1.75+.55,1.5);
 
% \draw[-latex,line width=.5pt] (3.5,3.25) to (3.5,3.25-.55);
% \draw[-latex,line width=.5pt] (3.25,3.5) to (3.25-.55,3.5);

\node (0) at (1.5,1.5) {$4$}; 
\node (1) at (2.5,1.5) {$2$}; 
\node (2) at (3.5,1.5) {$4$}; 

\node (3) at (1.5,2.5) {$6$}; 
\node (4) at (2.5,2.5) {$4$}; 
\node (5) at (3.5,2.5) {$6$}; 

\node (6) at (1.5,3.5) {$4$}; 
\node (7) at (2.5,3.5) {$2$}; 
\node (8) at (3.5,3.5) {$4$}; 
\draw[thick, -stealth] (0,0) -- (5.1,0);
\draw[thick,-stealth] (0,0) -- (0,5.1);

% \node (d1) at (2,2)[dot] {};
% \node (d2) at (3,3)[dot] {};
% \node (d3) at (4,1)[dot] {};
% \node (d4) at (1,4)[dot] {};

\node (d0) at (0,0)[dot] {};
\node (d1) at (1,3)[dot] {};
\node (d2) at (2,1)[dot] {};
\node (d3) at (3,4)[dot] {};
\node (d4) at (4,2)[dot] {};
\node (d5) at (5,5)[dot] {};

\end{tikzpicture}

(b)
\end{minipage}
\caption{(a) Cubical complex $\cC(5;2)$, $\cross_\vv\colon \cC(5;2)_2\to \N$ and free strand $\uu$, where $\vv$ and $\uu$ are given in Fig.~\ref{fig:braid}.  Note $\cross(\uu\rel \vv) = 2$. The other points are the improper vertices in $G_\vv$.  (b) Depiction of binary relation $\cF$ as a directed graph.} \label{fig:lap}
\end{figure}

% \begin{figure}[h!]
% \begin{minipage}[t]{0.5\textwidth}
%     \centering
%     \includegraphics[scale=.2]{computation/images/lap.png}\\
%     (a)
% \end{minipage}
% \begin{minipage}[t]{0.5\textwidth}
%     \centering
%     \includegraphics[scale=.25]{computation/images/braids.pdf}\\
%     (b)
% \end{minipage}
% \caption{(a) Cubical complex $\cC(5; 2)$ and visualization of $\cross_\uu\colon \cC_2\to \N$ associated to the skeleton $\uu$ in Fig.~\ref{fig:exp:braids}.  Each point is an improper strand $v\in G_\vv$. The directed graph $\cF$ is depicted with vertex set the collection of $\cC_2$. (b) The fiber graph of the $\sSC(\cF)$-graded cubical complex $(\cC,\nu)$.  Each $p\in \sSC(\cF)$ is labeled with the $f$-vector of $\cC^p=\nu^{-1}(p)$, viz., the fiber over $p$.}
%     \label{fig:lap}
% \end{figure}

Following~\cite{braids} we define a binary relation $\cF\subset \cC_d \times \cC_d$,  For this we need two more notions.  First, we say that
$\xi,\xi'\in \cC_d$ are \emph{adjacent $d$-cubes} if $\xi\cap \xi'\in\cC_{d-1}$.  Second, we define the concept of {\em improper vertices}.   If a strand $v^\alpha=(v_1^\alpha,\ldots,v_{d+1}^\alpha)$ in $\vv$ obeys the condition $v_1^\alpha=v^\alpha_{d+1}$, then it corresponds to a vertex in $\cC$ via
\[
v^\alpha = [v_1^\alpha,v_1^\alpha]\times \ldots \times [v_{d}^\alpha,v_{d}^\alpha].
\]
We call these vertices in $\cC$ {\em improper} and write $G_\vv$ for the set of improper vertices.  Define the relation $\cF\subset \cC_d\times \cC_d$ as follows.  A pair $(\xi',\xi)\in \cF$ if
\begin{enumerate}
    \item $\xi',\xi$ are adjacent $d$-cubes and $\cross_\vv(\xi') \leq \cross_\vv (\xi)$, or
    \item $\xi,\xi'$ are adjacent $d$-cubes and $\xi,\xi'\in \st(v)$ for some $v\in G_\vv$.
\end{enumerate}

The relation $\cF$ can be considered as a directed graph, where there is an edge $\xi\to \xi'$ if $(\xi',\xi)\in \cF$, see Fig.~\ref{fig:lap}(b).  Recall that a directed graph is \emph{strongly connected} if every vertex is reachable from every other vertex.
A \emph{strongly connected component} of a directed graph  is a maximal subgraph that is strongly connected.
The set of strongly connected components forms a partition of the vertices of a directed graph
%If we regard a finite binary relation as digraph $\cF$ then
 $\cF$ and is denoted by
 $\sSC(\cF)$, cf.\ \cite{braids}. The set $\sSC(\cF)$ can be given a poset structure as follows.  First, define the \emph{condensation graph} of $\cF$ as the directed acyclic graph with vertices $\sSC(\cF)$; there is a directed edge between two strongly connected components if and only if there is an edge between  vertices in the respective strongly connected components. The  transitive, reflexive closure of the condensation graph provides the poset structure for 
$\bigl(\sSC(\cF),\le\bigr)$.  Given the relation $\cF$, any $d$-cube $\xi\in \cC_d$ belongs to some strongly connected component, denoted $[\xi]$. We can define a $\sSC(\cF)$-graded cell complex by defining $f_\vv\colon \cC\to \sSC(\cF)$ by 
\[
f_\vv(\xi) = \min_{\sSC(\cF)}\{ [\xi'] \mid \xi' \in \st(\xi)\cap \cC_d\} \in \sSC(\cF).
\]

From~\cite{braids} this is well-defined, order-preserving map.  The cell complex $\cC$ together with the map $f_\vv$ are the input to the algorithm $\CM$.  The output is a (graded chain homotopy equivalent) chain complex, called the \emph{Conley complex}, the boundary operator of which is called the \emph{connection matrix}. Briefly, the pair $(\cC,f_\vv)$ form a chain complex $(C_\bullet(\cC),\partial_\bullet)$, which is \emph{$\sSC(\cF)$-graded}, meaning (i) for each $n$, $C_n(\cC)$ admits a decomposition
\[
C_n(\cC) =  
\bigoplus_{p\in \sSC(\cF)} G_pC_n, 
\]
and (ii) the boundary operator $\partial_\bullet$ is $\sSC(\cF)$-filtered, i.e., it obeys the property
\[
\partial_n ( G_qC_n ) \subseteq \bigoplus_{p\leq q} G_pC_n,
\]
for all $n$ and $q$.  In the specific case of $(\cC,f_\vv)$ we have that
\[
C_n(\cC) = \bigoplus_{\xi\in \cC_n} \K\langle \xi\rangle  
= \bigoplus_{p\in \sSC(\cF)} G_pC_n, \quad\text{where}\quad G_p C_n
:= \bigoplus_{\xi\in f^{-1}_\vv(p)\cap \cC_n } \K\langle \xi\rangle.
\]

The Conley complex is an $\sSC(\cF)$-graded chain complex $(C_\bullet^\conley,\partial^\conley_\bullet)$, which (i) obeys the further condition that $\partial_n(G_q C_n)\subseteq \bigoplus_{p<q} G_pC_n$,
and (ii) is $\sSC(\cF)$-filtered chain equivalent to $(\cC_\bullet(\cC),\partial_\bullet)$, see~\cite{hms}.  The Conley complex is an invariant for $\sSC(\cF)$-graded chain complexes which is analogous to homology as an invariant for chain complexes, see~\cite{hms} for a detailed explanation. Akin to $\HOM$, $\CM$ uses multiple rounds of discrete Morse theory, and the templates (and the algorithm $\GMate$) apply only to the first round.   

\begin{rem}
See~\cite[Section 10]{hms} for a discussion of how the Conley complex can be regarded as an invariant upstream of persistent homology; cf.~\cite{mn} where graded Morse theory is used for preprocessing the computation of persistent homology.
\end{rem}

\subsection{Experiments for Braids}

The braid examples provide a set nice set of relevant examples for grading the cubical complex $\cC(m-1;d)$.  In particular, they enable the study of the performance of the templates with respect to scaling $\cC(m-1;d)$ with respect to both $m$ and $d$.

\begin{figure}[h!]
\begin{minipage}{.48\textwidth}
\centering
\begin{tikzpicture}[dot/.style={draw,circle,fill,inner sep=.75pt},line width=.5pt,scale=.5]
\foreach \x in {0,1,2,3,4,5,6,7,8}
    \foreach \y in {0,.75,1.5,2.5,3.25,4}
        \node (\x\y) at (\x, \y)[dot] {};
\foreach \y in {0,4}
    \foreach \x[count=\xi]  in {0,1,2,3,4,5,6,7}
        \draw (\x\y) to (\xi\y);
\foreach \x in {0,2,4,6} {
     \draw (\x, .75) to (\x+1, 2.5);
     \draw (\x, 1.5) to (\x+1, .75);
     \draw (\x, 2.5) to (\x+1, 3.25);
     \draw (\x, 3.25) to (\x+1, 1.5); }
\foreach \x in {1,3,5,7} {
     \draw (\x, .75) to (\x+1, 1.5);
     \draw (\x, 1.5) to (\x+1, 3.25);
     \draw (\x, 2.5) to (\x+1, .75);
     \draw (\x, 3.25) to (\x+1, 2.5); }
\end{tikzpicture}\\
(a)
\end{minipage}
\hspace{\fill}
\begin{minipage}{.48\textwidth}
\flushright
\centering
\begin{tikzpicture}[dot/.style={draw,circle,fill,inner sep=.75pt},line width=.5pt,scale=.5]
\foreach \x in {0,1,2,3,4}
    \foreach \y in {0,.75,1.75,2.5,3.25,4}
        \node (\x\y) at (\x, \y)[dot] {};
\foreach \x in {0,1,2,3,4}
    \node (\x) at (\x,1.25) {\tiny $\vdots$};
%%constrant strands
\foreach \y in {0,4}
    \foreach \x in {0,1,2,3}
        \draw (\x,\y) to (\x+1,\y);
\foreach \x in {0,1,2,3}
    \draw (\x,.75) to (\x+1,3.25);

\foreach \x in {0,1,2,3}
    \foreach \y in {2.5, 3.25}
        \draw (\x,\y) to (\x+1,\y-.75);
\end{tikzpicture}\\
(b)
\end{minipage}
% \caption{}
%     \label{fig:exp:torus}
% \end{figure}
\caption{(a) Braid diagram $\vv_4\in \Conf_8^6$, the 4-fold cover of $\vv$. (b) Schematic for $\bt^m\in \Conf_4^{m}$.}\label{fig:exp:braids}
\end{figure}

Given $\uu\in \Conf_d^m$ and the associated cubical complex $\cC=\cC(m-1;d)$ with grading $f_\uu\colon \cC\to \sSC(\cF)$, we record the following parameters, in Table \ref{tab:init:v} and \ref{tab:init:torus}, which we term {\em initial data}: 
\begin{enumerate}
    \item Either the dimension of the cubical complex $d$, or the parameter $m$,
    \item $\# \cC_d = (m-1)^d$, i.e., the number of top dimensional cubes ($d$-cubes),
    \item  $\# \cC = (2m-1)^d$, i.e., the total number of cells, 
    \item  $\# \sSC(\cF)$, i.e., the number of strongly connected components of $\cF$.
\end{enumerate}   

We record the results of two algorithms applied  to the graded complex $(\cC,f_\uu)$.  The first algorithm, $\GMorse$, uses $w=\GMate$ together with~\cite[Algorithm 3.12]{focm}, to compute the initial graded Morse complex $C_\bullet(w)$. This is the first round of discrete Morse theory within the algorithm $\CM$.  For $\GMorse$, we record the following results:
\begin{enumerate}
    \item $\# C_\bullet(w)=\sum_i \dim C_i(w)$, i.e., the size of the Morse complex produced by $\GMate$, 
    \item the execution time of the algorithm $\GMorse$.
\end{enumerate} 

Applying $\CM$ to $(\cC,f_\uu)$ produces a Conley complex $C^\conley_\bullet$ and we record the following results:
\begin{enumerate}
    \item $\# C^\conley_\bullet = \sum_i \dim C_i^\conley$, i.e., the size of the Conley complex,
    %\item $\#\sRC(\cF)$, the vertices with non-trivial Conley index in $\sSC(\cF)$,
    \item $\# \text{Tower}$, the number of rounds of discrete Morse theory needed to simplify to the Conley complex,
    \item the execution time of the algorithm $\CM$.
\end{enumerate}

In the next sections we describe the particular braids $\vv_n$ and $\bt^m$ which we study.  %For now, we highlight Fig.~\ref{plot:braid} which shows the linear growth also holds for the complexes arising from the braid setting.

\subsubsection{$n$-fold covers}
Let $\uu\in \Conf_d^m$ be a braid diagram. The \emph{$n$-fold cover of $\uu$}, denoted $\uu_n$, is obtained by $n$ copies of the braid $\uu$, with $\uu_n\in \Conf_{n\cdot d}^m$, as in Fig.~\ref{fig:exp:braids}(a).  We compute the Conley complexes for $\vv_n$ with $\vv$ of Figure~\ref{fig:braid}(a) for $1\leq n \leq 5$.  Studying the $n$-fold covers provides a nice test case of the template scheme with respect to dimension $d$ of the cubical complex $\cC(m-1;d)$.  

From Table \ref{tab:cm:v2} we see that $\GMate$ achieves a great reduction in the number of cells.   In particular $\#$Tower = 2, independent of $n$.  This means only one round of Morse theory is needed after computing the initial Morse complex.

\begin{table}[h!]
\setlength{\tabcolsep}{5pt}
\centering
\begin{tabular}{l l l l l} % @{} serves to suppress white space at ends of table
\toprule
  \multicolumn{5}{c @{}}{Initial Data for $n$-fold covers of $\vv$ }\\ 
\cmidrule(l){1-5}
 {$n$}   & {$d$ } & {\#$\cC_d$ } & {\#$\cC$  } & {\#$\sSC(\cF)$} \\
\midrule
1     & 2 & 25 & 121 & 13  \\
2 & 4 & 625 & 14641 & 114  \\
3  & 6 & 15625 & 1771561 & 879    \\
4  & 8 & 390625 & 214358881 & 7212  \\
5  & 10 & 9765625 & 25937424601 & 62157   \\
\bottomrule
\end{tabular}
\caption{Data for $\vv_n\in \Conf_{2n}^6$, the $n$-fold cover of $\vv$.}
\label{tab:init:v}
\end{table}

\begin{table}[h!]
\centering
\begin{subtable}[t]{0.4\textwidth}
\setlength{\tabcolsep}{5pt}
\centering
\begin{tabular}{l l l} % @{} serves to suppress white space at ends of table
\toprule
\multicolumn{3}{c @{}}{$\GMorse$ for $\vv_n$}\\ 
\cmidrule(l){1-3}
 {$n$}   & {\#$C_\bullet(w)$}  & {Time Elapsed} \\
\midrule
1   &   7       &  $65.8 \mu s \pm 60.6 ns$\\
2 & 133       &  $11.3ms \pm 36.2 \mu s$ \\
3 & 1825      & $1.53 s \pm 4.01 ms$  \\
4 &  23281    & $4m29s\pm 395ms$\\
5  & 291017    & $17h6m45s\pm 23m 36s$ \\
\bottomrule
\end{tabular}
\caption{}
\label{tab:mf:v}
\end{subtable}
\hspace{\fill}
\begin{subtable}[t]{.55\textwidth}
\setlength{\tabcolsep}{5pt}
\centering
\begin{tabular}{l l l l} % @{} serves to suppress white space at ends of table
\toprule
   \multicolumn{4}{c @{}}{$\CM$ for $\vv_n$ }\\ 
\cmidrule(l){1-4}
  {$n$}  & {\#$C_\bullet^\conley$ }  & {\#Tower } & {Time Elapsed}  \\
\midrule
1      & 3  & 2 & $76.3 \mu s \pm 1.57 \mu s$   \\
2  & 33  & 2  & $11.6ms\pm 13.3 \mu s$   \\
3 & 197  & 2 & $1.53s\pm 3.14 ms$     \\
4 & 1155 & 2 &  $4m26s \pm 657 ms$  \\
5 & 6727 & 2 & $16h45m47s\pm 4m9s$\\%77.2hr  
\bottomrule
\end{tabular}
\caption{}
\label{tab:cm:v}
\end{subtable}
\caption{(a) Results for Morse complexes for $\vv_n$. (b) Results for Conley complexes for $\vv_n$.}
\label{tab:cm:v2}
\end{table}

%%%OLD TIMING INFORMATION
% \begin{table}[h!]
% \centering
% \begin{subtable}[t]{0.4\textwidth}
% \setlength{\tabcolsep}{5pt}
% \centering
% \begin{tabular}{l l l} % @{} serves to suppress white space at ends of table
% \toprule
% \multicolumn{3}{c @{}}{$\GMorse$ for $\vv_n$}\\ 
% \cmidrule(l){1-3}
%  {$n$}   & {\#$C_\bullet(w)$}  & {Time Elapsed} \\
% \midrule
% 1   &   7       &  $114\mu s\pm 410 ns$\\
% 2 & 133       &  $22.2 ms \pm 139 \mu s$ \\
% 3 & 1825      & $3.35s\pm 5.59 ms$\\
% 4 &  23281    & $10m 34s \pm 1.14 s$ \\
% 5  & 291017    & $45h37m17s \pm 26m54s$ \\
% \bottomrule
% \end{tabular}
% \caption{}
% \label{tab:mf:v}
% \end{subtable}
% \hspace{\fill}
% \begin{subtable}[t]{.55\textwidth}
% \setlength{\tabcolsep}{5pt}
% \centering
% \begin{tabular}{l l l l} % @{} serves to suppress white space at ends of table
% \toprule
%   \multicolumn{4}{c @{}}{$\CM$ for $\vv_n$ }\\ 
% \cmidrule(l){1-4}
%   {$n$}  & {\#$C_\bullet^\conley$ }  & {\#Tower } & {Time Elapsed}  \\
% \midrule
% 1      & 3  & 2 & $125 \mu s \pm 213 ns$    \\
% 2  & 33  & 2  & $22.9ms\pm 245 \mu s$     \\
% 3 & 197  & 2 & $3.38s\pm 17.3 ms$      \\
% 4 & 1155 & 2 &  $10m42s \pm 4.13 s$    \\
% 5 & 6727 & 2 & $46h16m30s\pm 2m12s$\\%77.2hr  
% \bottomrule
% \end{tabular}
% \caption{}
% \label{tab:cm:v}
% \end{subtable}
% \caption{(a) Results for Morse complexes for $\vv_n$. (b) Results for Conley complexes for $\vv_n$.}
% \label{tab:cm:v2}
% \end{table}

\subsubsection{Torus Knot}

% \begin{figure}[h!]
% \centering
% \begin{tikzpicture}[dot/.style={draw,circle,fill,inner sep=.75pt},line width=.5pt,scale=.5]
% \foreach \x in {0,1,2,3,4}
%     \foreach \y in {0,.75,2.25,3,3.75,4.5}
%         \node (\x\y) at (\x, \y)[dot] {};
% \foreach \x in {0,1,2,3,4}
%     \node (\x) at (\x,1.75) {$\vdots$};
% %%constrant strands
% \foreach \y in {0,4.5}
%     \foreach \x in {0,1,2,3}
%         \draw (\x,\y) to (\x+1,\y);
% \foreach \x in {0,1,2,3}
%     \draw (\x,.75) to (\x+1,3.75);

% \foreach \x in {0,1,2,3}
%     \foreach \y in {3, 3.75}
%         \draw (\x,\y) to (\x+1,\y-.75);
% \end{tikzpicture}
% \caption{Schematic for $\bt^m\in \Conf_4^{m}$.}
%     \label{fig:exp:torus}
% \end{figure}

% \begin{figure}[h!]
%     \centering
%     \includegraphics[scale=.2]{computation/images/torus.png}
% \caption{Schematic for 4-d braid $\bt$ on $m$ strands.}
%     \label{fig:exp:torus}
% \end{figure}

We also wish to examine how the templates scale on $\cC(m-1;d)$ with respect to $m$, i.e., the number of strands in the braid.  To do this, consider the braid diagram $\bt^m\in \Conf_4^m$, which is given schematically in Fig.~\ref{fig:exp:braids}(b). This braid diagram corresponds to a torus knot.  In this case, $d=4$ is fixed and $m$ is varied.   Regardless of the number of strands, the pattern of the torus knot remains the same: for any strand $t^\alpha,$ we have that the $t_{i+1}^\alpha$  is obtained from $t_i^\alpha$ via
\[
  t_{i+1}^\alpha =
  \begin{cases}
          t_{i}^\alpha-1 & 1 < \text{$t_{i}^\alpha < m-1$,}\\
          m-2 & \text{$t_{i}^\alpha = 1$}. 
  \end{cases}
\]

Finally, to agree with our assumptions on the braid diagrams, $\bt^m$ contains the constant strands $(0,\ldots,0)$ and $(m-1,\ldots,m-1)$.  The initial data for $\bt^m$ is in Table~\ref{tab:init:torus}. The results of the computations can be found in Tables~\ref{tab:c:torus}(a) and (b) where it can be seen that $\#\cC_\bullet(w)$ and $\#C_\bullet^\conley$ are independent of $n$.
A further check determined that the complexes themselves, $C_\bullet(w)$ and  $C_\bullet^\conley$, are actually independent of $n$.
This is a surprise; we know of no reason a priori to suspect this just from the form of the braid $\bt^m$.

\begin{table}[h!]
\setlength{\tabcolsep}{5pt}
\centering
\begin{tabular}{l l l l} % @{} serves to suppress white space at ends of table
\toprule
   \multicolumn{4}{c @{}}{Initial Data for $\bt^m$ }\\ 
\cmidrule(l){1-4}
    {$m$} & {\#$\cC_d$ } & {\#$\cC$  } & {\#$\sSC(\cF)$} \\
\midrule
10  &   6561  &  130321 &  624      \\
 20 &  130321 & 2313441 & 24154      \\
 40 & 2313441 & 38950081 &  587614   \\
 60 & 12117361 & 200533921  &  3386274   \\
 80 & 38950081 & 639128961 & 11396134\\
 100 & 96059601 & 1568239201 & 28873194\\
\bottomrule
\end{tabular}
\caption{Data for $\bt^m\in \Conf_4^m$.}
\label{tab:init:torus}
\end{table}

% \begin{table}[h!]
% \setlength{\tabcolsep}{10pt}
% \centering
% \begin{tabular}{l l l l} % @{} serves to suppress white space at ends of table
% \toprule
%      \multicolumn{4}{c @{}}{Data for $\GMate$ on $\bt^m$}\\ 
% \cmidrule(l){1-4}
%  {$m$}   & {\# $M$} & {$f$-vector} & {Time Elapsed } \\
% \midrule
% 10 & 81 & $(16,32, 24, 8, 1)$ & $133ms\pm 113 \mu s$ \\
% 20   & 81 & $(16,32, 24, 8, 1)$ & $2.29s\pm 3.05 ms$ \\
% 40 &  81  & $(16,32, 24, 8, 1)$  & $39.2s\pm 95.1 ms$ \\
% 60 &  81  & $(16,32, 24, 8, 1)$  & $3m27s\pm 238 ms$\\
% \bottomrule
% \end{tabular}
% \caption{Experimental results for cubical Morse theory for $\bt$.}
% \label{tab:mf:torus}
% \end{table}

% \begin{table}[h!]
% \setlength{\tabcolsep}{5pt}
% \centering
% \begin{tabular}{l l l l} % @{} serves to suppress white space at ends of table
% \toprule
%     \multicolumn{4}{c @{}}{Conley Complex Data for  $\bt$. }\\ 
% \cmidrule(l){1-4}
%     {$m$ }& {\# Cells }  & {\# Tower } & {Time Elapsed}  \\
% \midrule
% 10 & 3  & 2 & $134ms\pm 150\mu s $ \\
% 20      & 3   & 2 &  $2.29s \pm 9.59 ms$  \\
% 40      & 3    &  2 &     $39.3s\pm 188 m$    \\
% 60      & 3  & 2 &  $3m27s\pm 315ms$  \\
% \bottomrule
% \end{tabular}
% \caption{Results for Conley complexes for $\bt$.}
% \label{tab:cm:torus}
% \end{table}

\begin{table}[h!]
\begin{subtable}[t]{0.4\textwidth}
\setlength{\tabcolsep}{5pt}
\centering
\begin{tabular}{l l l} % @{} serves to suppress white space at ends of table
\toprule
     \multicolumn{3}{c @{}}{$\GMorse$ on $\bt^m$}\\ 
\cmidrule(l){1-3}
 {$m$}   & {\#$C_\bullet(w)$}  & {Time Elapsed } \\
\midrule
10 & 81 & $58.2 ms \pm 344 \mu s$ \\
20   & 81  & $995 ms \pm 1.52 ms$ \\
40 &  81  & $17s \pm 145 ms $ \\
60 &  81   & $1m28s\pm 702 ms$\\
80 & 81 & $4m48s\pm 2.63 s$\\
100 & 81 & $12m9s\pm 6.34s$\\
\bottomrule
\end{tabular}
\caption{}
%\label{tab:mf:torus}
\end{subtable}
\hspace{\fill}
\begin{subtable}[t]{.55\textwidth}
\setlength{\tabcolsep}{5pt}
\centering
\begin{tabular}{l l l l} % @{} serves to suppress white space at ends of table
\toprule
    \multicolumn{4}{c @{}}{$\CM$ on $\bt^m$. }\\ 
\cmidrule(l){1-4}
    {$m$ }& {\#$C_\bullet^\conley$ }  & {\#Tower } & {Time Elapsed}  \\
\midrule
10 & 3  & 2 & $58.3ms \pm 88.2 \mu s$ \\
20      & 3   & 2 &  $1.01 s \pm 3.78 ms$ \\
40      & 3    &  2 &     $17s\pm 70.8 ms$   \\
60      & 3  & 2 &  $1m28s\pm 320ms$ \\
80 & 3  & 2 & $4m49s\pm 1.94s$ \\
100 & 3  & 2 & $12m9s\pm 7.83 s$ \\

\bottomrule
\end{tabular}
\caption{}
%\label{tab:cm:torus}
\end{subtable}
\caption{(a) Results for Morse complexes for $\bt^m$. (b) Results for Conley complexes for $\bt^m$.}\label{tab:c:torus}
\end{table}
\begin{acknowledgements} KM  was 
partially supported by NSF grants  1521771, 1622401, 1839294, 1841324, 1934924, by NIH-1R01GM126555-01 as part of the Joint DMS/NIGMS Initiative to Support Research at the Interface of the Biological and Mathematical Science, DARPA contract HR0011-16-2-0033, and a grant from the Simons Foundation. KS was partially supported by the NSF Graduate Research Fellowship Program under grant DGE-1842213 and by EPSRC grant EP/R018472/1.
\end{acknowledgements}

% BibTeX users please use one of
%\bibliographystyle{spbasic}      % basic style, author-year citations
\bibliographystyle{spmpsci}      % mathematics and physical sciences
\bibliography{ref}   % name your BibTeX data base

\end{document}